\newenvironment{namelist}[1]{%
\begin{list}{}
{

\settowidth{\labelwidth}{#1}
\setlength{\leftmargin}{1.1\labelwidth}
}
}{%
\end{list}}
\newcommand{\ncom}{\newcommand}
\ncom{\ul}{\underline}
\ncom{\beq}{\begin{equation}}
\ncom{\eeq}{\end{equation}}
\ncom{\bea}{\begin{eqnarray*}}
\ncom{\eea}{\end{eqnarray*}}
\ncom{\beqa}{\begin{eqnarray}}
\ncom{\eeqa}{\end{eqnarray}}
\ncom{\nno}{\nonumber}
\ncom{\non}{\nonumber}
\ncom{\ds}{\displaystyle}
\ncom{\half}{\frac{1}{2}}
\ncom{\mbx}{\makebox{.25cm}}
\ncom{\hs}{\mbox{\hspace{.25cm}}}
\ncom{\rar}{\rightarrow}
\ncom{\Rar}{\Rightarrow}
\ncom{\noin}{\noindent}
\ncom{\bc}{\begin{center}}
\ncom{\ec}{\end{center}}
\ncom{\sz}{\scriptsize}
\ncom{\rf}{\ref}
\ncom{\s}{\sqrt{2}}
\ncom{\sgm}{\sigma}
\ncom{\Sgm}{\Sigma}
\ncom{\psgm}{\sigma^{\prime}}
\ncom{\dt}{\delta}
\ncom{\Dt}{\Delta}
\ncom{\lmd}{\lambda}
\ncom{\Lmd}{\Lambda}
\ncom{\Th}{\Theta}
\ncom{\e}{\eta}
\ncom{\eps}{\epsilon}
\ncom{\pcc}{\stackrel{P}{>}}
\ncom{\lp}{\stackrel{L_{p}}{>}}
\ncom{\dist}{{\rm\,dist}}
\ncom{\sspan}{{\rm\,span}}
\ncom{\re}{{\rm Re\,}}
\ncom{\im}{{\rm Im\,}}
\ncom{\sgn}{{\rm sgn\,}}
\ncom{\ba}{\begin{array}}
\ncom{\ea}{\end{array}}
\ncom{\hone}{\mbox{\hspace{1em}}}
\ncom{\htwo}{\mbox{\hspace{2em}}}
\ncom{\hthree}{\mbox{\hspace{3em}}}
\ncom{\hfour}{\mbox{\hspace{4em}}}
\ncom{\vone}{\vskip 2ex}
\ncom{\vtwo}{\vskip 4ex}
\ncom{\vonee}{\vskip 1.5ex}
\ncom{\vthree}{\vskip 6ex}
\ncom{\vfour}{\vspace*{8ex}}
\ncom{\norm}{\|\;\;\|}
\ncom{\integ}[4]{\int_{#1}^{#2}\,{#3}\,d{#4}}
\ncom{\vspan}[1]{{{\rm\,span}\{ #1 \}}}
\ncom{\dm}[1]{ {\displaystyle{#1} } }
\ncom{\ri}[1]{{#1} \index{#1}}
\def\E{{\mathbb E}}
\def\P{{\mathbb P}}
\newtheorem{theorem}{\bf Theorem}[section]
\newtheorem{remark}{\bf Remark}[section]
\newtheorem{proposition}{Proposition}[section]
\newtheorem{corollary}{Corollary}[section]
\newtheoremstyle
    {remarkstyle}
    {}
    {11pt}
    {}
    {}
    {\bfseries}
    {:}
    {     }
    {\thmname{#1} \thmnumber{#2} }
\theoremstyle{remarkstyle}
\begin{document}

\newpage
\begin{center}
{\Large \bf First-Exit Times of an Inverse Gaussian Process}\\
\end{center}
\vone
\begin{center}
{\bf P. Vellaisamy$^{a}$ and A. Kumar$^{b}$}\\
{\it $^{a}$Department of Mathematics,
Indian Institute of Technology Bombay,\\Powai, Mumbai-400076, India.}\\
{\it $^{b}$Department of Mathematics,
Indian Institute of Technology Madras,\\Adyar, Chennai-600036, India.}\\
\end{center}

\vone

\vtwo
\begin{center}
\noindent{\bf Abstract}
\end{center}
The first-exit time process of an inverse Gaussian L\'evy process is considered. 
The one-dimensional distribution functions of the process are obtained. They are not infinitely divisible and the tail probabilities decay exponentially. These distribution functions can also be viewed as distribution functions of supremum of the Brownian motion with drift. The density function is shown to solve a fractional PDE and the result is also generalized to tempered stable subordinators. The subordination of this process to the Brownian motion is considered and the underlying PDE of the subordinated process is obtained. The infinite divisibility of the first-exit time of a $\beta$-stable subordinator is also discussed.
\vone
\noindent{\it Key words:}  First-exit times; infinite divisibility; inverse Gaussian
 process; tail probability, subordinated process.
\vtwo

\section{Introduction}
The first-exit time process is a stopping time process which arises naturally in diverse fields such as finance, insurance, process control and survival analysis (Lee and Whitmore (2006)). The inverse Gaussian process and its extensions
(see Bandorff-Nielson (1997), Kumar and Vellaisamy (2012)) serve as important probabilistic models for analyzing financial data.
When the total claim size $S(t)$ of an insurance company follows inverse Gaussian process (see e.g. Dufresne and Gerber (1993), Gerber (1992) and the references therein), then the exit-time process corresponds to the situation when the aggregate claim  for that company exceeds certain level. Indeed, this is closely related to the probability of ruin also.
As another example, the first time  an asset price process reaches a certain level is of interest to an investment firm.
The exit-time models are also applied to expected lifetimes of mechanical devices, where the device breaks down when the process reaches an adverse threshold state for the first time.\\
\noindent The hitting time process of the standard Brownian motion has been well studied in the literature and it is  also called a L\'evy subordinator (see e.g. Applebaum, 2009). The hitting time for a general Gaussian process is obtained by Decreusefond and Nualart (2008). Many authors have discussed the first-exit time process for strictly increasing L\'evy processes (see e.g. Meerschaert and Scheffler (2008), Veillette and Taqqu (2010a)). However, an explicit expression for the density of first-exit time process is not possible in many cases (see e.g. Veillette and Taqqu (2010b)). The focus of most of these authors are on the Laplace transform of exit time density or on mean first-hitting time process (see e.g. Veillette and Taqqu (2010b)). In this paper, our aim is to discuss the first-exit time process for an inverse Gaussian (IG) L\'evy process and investigate its various other properties. We provide an alternative way of obtaining the density function of supremum of Brownian motion with drift. Note that inverse Gaussian model and its extensions have been found quite useful
for modeling  financial data (see Barndorff-Nielsen (1997) ). \\

\noindent The density function of an inverse Gaussian distribution IG$(a, b)$, with parameters $a$ and $b$ $(a, b>0)$, is given by
\beq
f(x;a,b) = \displaystyle{(2\pi)}^{-1/2} a x^{-3/2}e^{ab-\frac{1}{2}(a^2 x^{-1} + b^2 x)},~~x>0.
\eeq
It is well known that inverse Gaussian distributions, and generalized inverse Gaussian (GIG) distributions, are generalized gamma convolutions
(see Halgreen, 1979) which are infinitely divisible.
The IG process $\{G(t)\}_{t>0}$  is defined as
(see Applebaum, 2009, p.\ 54)
\beq\label{ig}
G(t) = \inf\{s > 0; B(s) + \gamma s > \delta t \},
\eeq
where $B(t)$ is the standard Brownian motion. That is, $G(t)$ denotes the first time when the Brownian motion with
drift $\gamma$ hits the barrier $\delta t$.
The IG subordinator $G(t)$ is a non-decreasing L\'evy process such
that the increment $G({t+s}) - G(s) $ has an inverse Gaussian IG$(\delta t, \gamma)$ distribution.
The L\'evy measure $\pi$
 corresponding to the inverse Gaussian subordinator is given by
(see e.g. Cont and Tankov (2004))
\beq\label{lmeas}
\pi(dx) = \frac{\delta}{\sqrt{2\pi x^3}}e^{-\frac{\gamma^2x}{2}} I_{\{x>0\}}dx.
\eeq
Note that almost all sample paths of $G(t)$ are strictly increasing with jumps, since the sample paths of $B(t)+\gamma t$ are continuous and having intervals where paths are decreasing. The strictly increasing property of sample paths of $G(t)$ also follows by using Theorem 21.3 of Sato (1999), since
$\int_{0}^{\infty}\pi(dx) = \infty$. Let $H(t)$ be the right continuous inverse of $G(t)$, defined by
\beq
H(t) = \inf\{u>0: G(u)>t\},~ t\geq 0.
\eeq
Since the sample paths of $G(t)$ are strictly increasing with jumps, the sample paths of $H(t)$ are continuous and are constant over the intervals where $G(t)$ have jumps.
In this paper, we find out the Laplace transform of the density of the process $H(t)$ with respect to time variable and then invert
it to
get the corresponding density function $h(x,t)$. Note also that, for $x>0$,
\begin{equation}\label{feig-supBM}
\{H(t) \leq x\} = \{G(x) \geq t\} = \{\sup_{s\leq t} (B(s) + \gamma s) \leq \delta x\} = \{\delta^{-1}\sup_{s\leq t}( B(s) + \gamma s) \leq  x\},
\end{equation}
and hence $H(t) \stackrel{d} =\displaystyle\delta^{-1}\sup_{s\leq t} (B(s) + \gamma s)$, where the symbol $\stackrel{d}=$ denotes the equality in distributions. Thus, density function of $H(t)$ can also be viewed as density of supremum of Brownian motion with drift.
 It is shown that the process $H(t)$ is not a L\'evy process. The distribution of $H(t)$ is not infinitely divisible and its tail probability $\P(H(t)>x)$ decays exponentially. The density function of $h(x,t)$ solves a pseudo fractional differential equation. The PDE satisfied by the subordination of $H(t)$ with the standard Brownian motion $B(t)$ is also obtained.
\setcounter{equation}{0}
\section{First-Exit Times of an Inverse Gaussian Process}
Let $\mathcal{L}_t w(x,t) = \int_{0}^{\infty}e^{-st}w(x,t)dt$ be the Laplace transform (LT) of the function $w$ with respect the the time variable $t$.
 For a driftless subordinator $D(u)$ with corresponding L\'evy measure $\pi_D$ and density function $f$ from L\'evy-Khinchin representation, we have (see e.g. Bertoin (1996); Sato (1999))
 \beq
 \int_{0}^{\infty}e^{-st}f_{D(x)}(t)dt = e^{-x\Psi_D(s)},
 \eeq
 where
 \beq\label{lsym}
 \Psi_D(s) = \int_{0}^{\infty}(1-e^{-su})\pi_D(du)
 \eeq
 is  the Laplace symbol. First we need the following result which is a weaker version of Theorem 3.1 of Meerschaert and Scheffler (2008), and also provide a slightly different proof.

\begin{theorem}
 Let $Y= \{Y(t)\}_{t>0}$ be a strictly increasing subordinator with density  $p(x,t)$ admitting continuous partial derivatives.  Let $W(t) = \inf\{x>0: Y(x)>t\}$ so that $\{W(t)\}$  represents the first-exit time process of $\{Y(t)\}.$Then the density function $q(x,t)$ of $W(t)$  is given by
 \beq \label{mmr-result}
 q(x,t) = \int_{0}^{t}\pi_Y(t-y, \infty) p(y, x)dy, ~t>0,~x>0,
 \eeq
where $\pi_Y$ is the Levy measure with $\pi_Y (0, \infty)= \infty.$
 \end{theorem}
 \noindent\begin{proof}
 Since the sample paths of $\{Y(t)\}$ are
 strictly increasing, we have $\P(W(t)\leq x) = \P(Y(x)\geq t), x>0, t>0.$  This implies
 \begin{equation} \label{eqn2.4n}
  \int_{0}^{x}q(u,t)du = \int_{t}^{\infty}p(u,x)du, ~t>0,~x>0.
  \end{equation}
	Differentiating both sides with respect to $x$, we get
	\begin{equation} \label{eqn2.5n}
	q(x, t)= \int_{t}^{\infty} \frac{\partial}{\partial x} p(u, x) du.
	\end{equation}
	Letting now $t \rightarrow 0^{+}$, we get for $x>0$,
	\begin{equation} \label{eqn2.6n}
	q(x, 0)=  \frac{d}{dx} \int_{0}^{\infty} p(u, x) du= 0.
	\end{equation}
	Also, differentiating now \eqref{eqn2.5n} with respect to $t$, we get
	\begin{equation} \label{eqn2.7n}
	\frac{\partial}{\partial t}q(x,t) = -\frac{\partial}{\partial x}p(t, x).
	\end{equation}

 \noindent Taking the LT on both sides with respect to $t$ and 
  using $\mathcal{L}_t p(t,x) = e^{-x \Psi_Y(s)}$,  we have
 \begin{align}\label{l1}
     s\mathcal{L}_t q(x,t) - q(x,0) &= - \frac{\partial}{\partial x} \mathcal{L}_t p(t,x) = -\frac{\partial}{\partial x}
 e^{- x \Psi_Y(s)}\nonumber\\
 \implies \mathcal{L}_t[q(x,t)] &= \frac{1}{s}\Psi_Y(s)e^{- x \Psi_Y(s)} ~~\text{(using \eqref{eqn2.6n})}.
 \end{align}

\noindent Also, from (3.12) of Meerschaert and Scheffler (2008), $\mathcal{L}_t (\pi_{Y}(t, \infty))= 
\Psi_Y(s)/s$. Hence, we get
\begin{align*}\label{l2n}
\mathcal{L}_t q(x,t) &= \mathcal{L}_t \left(\pi_{Y}(t, \infty)) \mathcal{L}_t ( p(t, x) \right) \\
                     &= \mathcal{L}_t \left(\int_{0}^{t}\pi_Y(t-y, \infty) p(y, x)dy \right) \\
										 & =\mathcal{L}_t (q^{*}(x,t)) ~\text{ (say)}.
\end{align*}
\noindent Since $q(x,t)$ (see \eqref{eqn2.5n})  and $q^{*}(x,t)$ are continuous in $t$, we get  $q(x,t)=q^{*}(x,t).$
This proves the result. 
\end{proof}

\noindent Let $g(u,x)$ denote the density function of
 $G(x)\sim$ IG$(\delta x, \gamma)$ and let $h(x,t)$ denote the density function of the first-exit time process $H(t)$.
Using \eqref{l1} and the fact that $\mathcal{L}_t g(t,x) = e^{-\delta x(\sqrt{\gamma^2+2s}-\gamma)}$, we have the following result.
\begin{corollary} The LT of the density function of the first-exit time process $H(t)$ of the
inverse Gaussian process $G(t)$ is given by
 \beq\label{lthxt}
\mathcal{L}_t[h(x,t)] = \frac{\delta}{s}(\sqrt{\gamma^2+2s}-\gamma) e^{-\delta x(\sqrt{\gamma^2+2s}-\gamma)}.
\eeq
\end{corollary}

\begin{remark}
 The Laplace-Laplace transform (LLT) of the function $h$ defined by
 \bea
 \bar{h}(u,s) = \int_{\mathbb{R}}e^{-ux}\left(\int_{0}^{\infty}e^{-s t} h(x, t)dt\right) dx
 \eea
 is given by
 \beq\label{llt}
 \bar{h}(u,s) = \left(\frac{1}{s}\right)\frac{\delta(\sqrt{\gamma^2+2s}-\gamma)}{u + \delta(\sqrt{\gamma^2+2s}-\gamma)}.
 \eeq
\end{remark}
\begin{theorem}
 The density function $h(x,t)$ of $H(t)$ can be put in the following integral form
 \beq\label{density}
 h(x,t) = \frac{\delta}{\pi} e^{\delta\gamma x -\frac{\gamma^2}{2}t}\int_{0}^{\infty}\frac{e^{-ty}}{y+\gamma^2/2}
 \left[\gamma \sin(\delta x\sqrt{2y}) + \sqrt{2y}\cos(\delta x\sqrt{2y})\right] dy, \;\;x>0.
 \eeq
\end{theorem}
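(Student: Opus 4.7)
The plan is to invert the Laplace transform given in the preceding corollary by the Bromwich formula
\[
h(x,t)=\frac{1}{2\pi i}\int_{c-i\infty}^{c+i\infty} e^{st}F(s)\,ds,\qquad F(s):=\frac{\delta}{s}\bigl(\sqrt{\gamma^{2}+2s}-\gamma\bigr)\,e^{-\delta x(\sqrt{\gamma^{2}+2s}-\gamma)},
\]
by a keyhole contour around the branch cut of $F$. First I would catalogue the singularities of $F$. The square root forces a branch point at $s=-\gamma^{2}/2$, and I would choose the principal branch so that the cut lies along $(-\infty,-\gamma^{2}/2]$. The factor $1/s$ is an apparent pole at $s=0$, but since $\sqrt{\gamma^{2}+2s}-\gamma\sim s/\gamma$ as $s\to 0$, the singularity is removable, so after choosing $c>0$ there are no singularities to the right of the Bromwich line and none off the branch cut.

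Next I would close the Bromwich contour on the left with a large arc and a keyhole that runs along the top of the cut from $-\infty$ to $-\gamma^{2}/2$, around the branch point by a small semicircle, and back along the bottom to $-\infty$. Since the resulting closed contour encloses no singularities, Cauchy's theorem gives zero, and it remains to show that the large arc and the small semicircle contribute nothing in the limit. On the large arc this follows from a Jordan-type estimate since $F(s)e^{st}$ decays when $t>0$; near the branch point $F(s)$ is bounded, so the small semicircle contribution also vanishes. What survives is the jump across the cut, yielding
\[
h(x,t)=-\frac{1}{2\pi i}\int_{-\infty}^{-\gamma^{2}/2}\bigl[F^{+}(s)-F^{-}(s)\bigr]\,e^{st}\,ds.
\]

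To evaluate the jump, I would parametrise $s=-\gamma^{2}/2-y$ with $y>0$, so that $\sqrt{\gamma^{2}+2s}=+i\sqrt{2y}$ on the upper side and $-i\sqrt{2y}$ on the lower side. Substituting these into $F^{\pm}$, factoring out the common $e^{\delta\gamma x}$, and applying Euler's formulas converts the difference of complex exponentials into
\[
F^{+}(s)-F^{-}(s)=\frac{-2i\delta\,e^{\delta\gamma x}}{y+\gamma^{2}/2}\Bigl[\sqrt{2y}\cos(\delta x\sqrt{2y})+\gamma\sin(\delta x\sqrt{2y})\Bigr].
\]
Changing variables $s\mapsto y$ (with $e^{st}=e^{-\gamma^{2}t/2}e^{-yt}$ and $ds=-dy$) and simplifying the factors of $i$ produces the integral \eqref{density}.

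The main technical obstacle is the justification of the contour manipulations: verifying that the large left arc contributes zero requires an estimate of the form $|F(s)|=o(1/|s|)$ as $|s|\to\infty$ in $\Re s\le c$, which follows from $\Re\sqrt{\gamma^{2}+2s}\ge 0$ so that $|e^{-\delta x\sqrt{\gamma^{2}+2s}}|\le 1$, together with the $1/s$ prefactor; and verifying that the small arc around $-\gamma^{2}/2$ contributes zero requires the local integrability of the square-root singularity. Beyond this, the only real work is bookkeeping of signs and orientations when passing from $F^{+}-F^{-}$ to the real-valued integrand, and recognising that the removable singularity at $s=0$ is consistent with $h(x,t)$ being a bona fide density with $\int_{0}^{\infty}h(x,t)\,dt$ finite.
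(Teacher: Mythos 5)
Your proof is correct and follows essentially the same route as the paper: Bromwich inversion of \eqref{prop2.1} via a keyhole contour around the branch point at $s=-\gamma^2/2$, with the large arc and small circle vanishing and the jump across the cut (where $\sqrt{\gamma^2+2s}=\pm i\sqrt{2y}$) producing the sine and cosine terms. The only cosmetic difference is that you treat $F$ as a whole and use the removable singularity at $s=0$, whereas the paper splits $F$ into $F_1,F_2$, each with a genuine pole at $s=0$, and lets the residues cancel at the end; note also that both your derivation and the paper's intermediate formulas \eqref{f1}--\eqref{f2} actually yield the factor $e^{-t\gamma^2/2}$ rather than the $e^{-\gamma^2/2}$ printed in the theorem statement.
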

\noindent \begin{proof}
The density function of $H(t)$ is calculated by taking the Laplace inverse of $\mathcal{L}_t[h(x,t)]$ given in (\ref{lthxt}), by using the Laplace
inversion formula. Let $\mathcal{L}_t[h(x,t)] = F(s)$.
Then
\beq\label{f}
h(x,t) = \frac{1}{2\pi i} \int_{x_0-i\infty}^{x_0+i\infty}e^{st}F(s)ds.
\eeq
Write
\bea
F(s) = \delta e^{\delta\gamma x}\left(\frac{\sqrt{\gamma^2+2s}}{s}e^{-\delta x\sqrt{\gamma^2+2s}}-\frac{\gamma}{s}e^{-\delta x\sqrt{\gamma^2+2s}}\right) = \delta e^{\delta\gamma x}(F_2(s)-\gamma F_1(s))\ \rm{(say)}.
\eea

\begin{figure}[ht]
\centering{\includegraphics[scale=0.45]{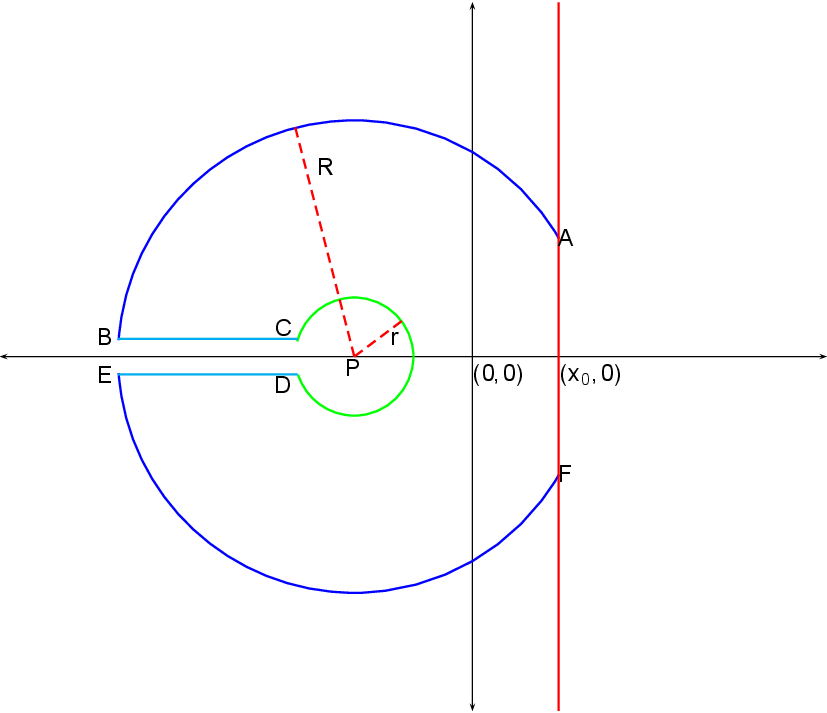}}
\caption{Contour ABCDEFA}
\end{figure}

\noindent For calculating integral in \eqref{f}, we consider a closed key-hole contour ABCDEFA with a branch point at P$=(-\gamma^2/2, 0)$ (see Figure 1).
Here AB and EF are arcs of a circle of radius $R$ with center at $P$, BC and DE are line segments parallel to negative  $x$-axis, CD is an arc $\gamma_r$ of a circle of radius $r$ with center at P and FA is the line segment from $x_0-iy$ to $x_0+iy$ with $x_0>0$. First, we calculate the above integral explicitly for
\bea
F_1(s) = \frac{1}{s}e^{-\delta x\sqrt{\gamma^2+2s}}.
\eea
By residue Theorem,
\begin{align}\label{contour}
\frac{1}{2\pi i}\int_{AB}e^{st}F_1(s) ds &+ \frac{1}{2\pi i}\int_{BC}e^{st}F_1(s) ds + \frac{1}{2\pi i}\int_{CD}e^{st}F_1(s) ds +\frac{1}{2\pi i}\int_{DE}e^{st}F_1(s) ds\nonumber\\
&+\frac{1}{2\pi i}\int_{EF}e^{st}F_1(s) ds +\frac{1}{2\pi i}\int_{x_0-iy}^{x_0+iy}e^{st}F_1(s)ds = e^{-\delta\gamma x},
\end{align}
which is the residue of $F_1$ at $s=0$.
Since $\displaystyle|F_1(s)|\leq \frac{1}{s}$, we have
\beq\label{limit}
\lim_{R\rightarrow\infty}\int_{AB}e^{st}F_1(s) ds = \lim_{R\rightarrow\infty}\int_{EF}e^{st}F_1(s) ds =0,
\eeq
(see Schiff (1999), Lemma 4.1, p. 154). Further, for the path CD, after putting $s = -\gamma^2/2 + re^{i\theta}$, we have
\begin{align}\label{cd}
\int_{CD}e^{st}F_1(s) ds &= \int_{CD}\frac{e^{st}}{s}e^{-\delta x\sqrt{\gamma^2+2s}}ds
= \int_{\pi-\epsilon}^{-\pi+\epsilon}\frac{e^{t(-\gamma^2/2+re^{i\theta})}}{-\gamma^2/2+re^{i\theta}}e^{-\delta x \sqrt{2re^{i\theta}}}
i r e^{i\theta}d\theta\nonumber\\
&\rightarrow 0,
\end{align}
as $r\rightarrow 0$, since the integrand is bounded.
\noindent Along BC, put $\gamma^2+ 2s= 2ye^{i \pi}$ so that $\sqrt{\gamma^2+2s} =i\sqrt{2y}$ and $ds =-dy$. We have
\begin{equation}\label{bc}
 \int_{BC}e^{st}F_1(s) ds = \int_{-R-\gamma^2/2}^{-r-\gamma^2/2}\frac{e^{st}}{s}e^{-\delta x\sqrt{\gamma^2+2s}}ds = e^{-t\gamma^2/2}\int_{R}^{r}\frac{1}{\gamma^2/2+y}e^{-ty-i\delta x\sqrt{2y}}dy
\end{equation}

\noindent Next along DE, take $\gamma^2+ 2s= 2ye^{-i \pi}$, this implies $\sqrt{\gamma^2+2s} =-i\sqrt{2y}$ and $ds =-dy$. This implies
\begin{equation}\label{de}
 \int_{DE}e^{st}F_1(s) ds = \int_{-r-\gamma^2/2}^{-R-\gamma^2/2}\frac{e^{st}}{s}e^{-\delta x\sqrt{\gamma^2+2s}}ds = e^{-t\gamma^2/2}\int_{r}^{R}\frac{1}{\gamma^2/2+y}e^{-ty+i\delta x\sqrt{2y}}dy.
\end{equation}
Using \eqref{bc} and \eqref{de}, we get
 \beq\label{sum}
\frac{1}{2\pi i}\int_{BC}e^{st}F_1(s) ds + \frac{1}{2\pi i}\int_{DE}e^{st}F_1(s)ds = \frac{e^{-t\gamma^2/2}}{\pi}\int_{r}^{R}\frac{e^{-ty}}{\gamma^2/2+y}\sin(\delta x\sqrt{2y})dy.
  \eeq
Using \eqref{contour}, \eqref{limit}, \eqref{cd} and \eqref{sum} with $r\rightarrow 0, R\rightarrow\infty$, we get
\beq\label{f1}
\frac{1}{2\pi i} \int_{x_0-i\infty}^{x_0+i\infty}e^{st}F_1(s)ds = e^{-\delta\gamma x} - \frac{e^{-t\gamma^2/2}}{\pi}\int_{0}^{\infty}\frac{e^{-ty}}{\gamma^2/2+y}\sin(\delta x\sqrt{2y})dy.
\eeq
Next consider
\bea
F_2(s) = \frac{1}{s}\sqrt{\gamma^2+2s}e^{-\delta x\sqrt{\gamma^2+2s}}.
\eea
Using the same procedure as for $F_1(s)$, we get for $F_2(s)$ as
\beq\label{f2}
\frac{1}{2\pi i} \int_{x_0-i\infty}^{x_0+i\infty}e^{st}F_2(s)ds = \gamma e^{-\delta\gamma x} + \frac{e^{-t\gamma^2/2}}{\pi}\int_{0}^{\infty}\frac{\sqrt{2y}e^{-ty}}{\gamma^2/2+y}\cos(\delta x\sqrt{2y})dy.
\eeq
The result follows now by using \eqref{f1} and \eqref{f2} with \eqref{f}.
 \end{proof}

\begin{remark} For every $t>0$, define $M_{\gamma}(t) = \sup_{0\leq s \leq t}(B(s) + \gamma s).$ Then for $x\geq 0$ (see Boukai (1990))
\begin{equation}\label{supBM}
\P(M_{\gamma}(t) >x) = 1 - \Phi\left(\frac{x-\gamma t}{\sqrt{t}}\right) + e^{2\gamma x}\left(1-\Phi\left(\frac{x+\gamma t}{\sqrt{t}}\right)\right),
\end{equation}
where $\Phi$ denotes cdf of standard normal distribution. Using \eqref{feig-supBM}, \eqref{density} and \eqref{supBM} are different representations of distribution function of supremum of Brownian motion with drift $M_{\gamma}(t)$. Using \eqref{mmr-result}, \eqref{lmeas} and density function of $G(t)$, an alternative representation of distribution function of $M_{\gamma}(t)$ can be written as
\begin{equation}\label{double-integral-repr}
q(x,t) = \frac{e^{\gamma x}}{2\pi}\int_{0}^{t}\left(\int_{t-y}^{\infty}u^{-3/2}e^{-\gamma^2u/2}du\right)y^{-3/2} e^{-\frac{1}{2}\left(\frac{x^2}{y} + \gamma^2 y\right)}dy. 
\end{equation}
\end{remark}
\noindent For general $\gamma>0$, it is difficult to show that \eqref{density}, \eqref{supBM} and \eqref{double-integral-repr} are equivalent. However for $\delta =1$ and $\gamma=0$, we can easily show that these three expressions are equivalent.
\begin{remark}(i)
For the particular case $\gamma =0$ and $\delta=1$, from \eqref{density}, we have
\begin{align*}
h(x,t) &= \frac{\sqrt{2}}{\pi}\int_{0}^{\infty}y^{-1/2}e^{-ty}\cos( x\sqrt{2y})dy= \frac{2\sqrt{2}}{\pi}\int_{0}^{\infty}e^{-tu^2}\cos(\sqrt{2} x u)du = \sqrt{\frac{2}{\pi t}}e^{-x^2/{2t}},
\end{align*}
by using, $\int_{0}^{\infty}e^{-tu^2}\cos(au)du = \frac{1}{2}\sqrt{\frac{\pi}{t}}e^{-a^2/{4t}}$
(see e.g. Abramowitz and Stegun (1992)).\\

\noindent From \eqref{supBM}, $\P(M_0(t)> x) = 1 - 2\Phi(x/\sqrt{t})$. Taking derivative of both sides with respect to $x$ imlies, $f_{M_0(t)}(x) = \sqrt{\frac{2}{\pi t}}e^{-x^2/{2t}}$.\\

\noindent Further, \eqref{double-integral-repr} implies
\begin{align*}
q(x,t) &= \frac{1}{2\pi} \int_{0}^{t}\left(\int_{t-y}^{\infty}u^{-3/2}du\right)y^{-3/2} e^{-\frac{x^2}{2y}}dy = \frac{x}{\pi}\int_{0}^{t}y^{-3/2}(t-y)^{-1/2}e^{-\frac{x^2}{2y}} dy\\
& = \frac{x}{\pi t}e^{-\frac{x^2}{2t}} \int_{0}^{\infty} u^{-1/2} e^{-\frac{x^2}{2t}u}du~~~~  ({\rm substitute} ~~y = t/(1+u))\\
& = \sqrt{\frac{2}{\pi t}}e^{-x^2/{2t}}.
\end{align*} 
(ii) It is obvious that, when $\gamma =0$ and $\delta=1$, $H(t) \stackrel{d}=|B(t)|$. However, $H(t)$ and $|B(t)|$
are not the same processes since the sample paths of $H(t)$ are monotonically increasing while the sample paths of $|B(t)|$ are oscillatory.
For a L\'evy process $Z(t)$, $\E(Z(t)) = t\E(Z(1))$. Since
$t\E(H(1)) = t\sqrt{2/\pi} \neq \sqrt{2t/\pi} = \E(H(t))$, we see that $H(t)$ is not a L\'evy process.
\end{remark}

\noindent For $q>0$, let $M_q(t) = \E(H(t))^q$ denote the $q$-th moment of the process $H(t)$, which may be numerically evaluated for known $t$ by using the density function $h(x,t)$. However, an explicit expression can be obtained by using the LT of $M_q(t)$. Let $\tilde{M_q}(s)$ denotes the LT of $M_q(t)$. As given in Veillette and Taqqu (2010a), we have
\beq\label{moments}
\tilde{M_q}(s) = \frac{q\Gamma(1+q)}{s\left(\Psi_G(s)\right)^q},
\eeq
where $\Psi_G(s) = \delta(\sqrt{\gamma^2+2s}-\gamma)$ denotes the Laplace symbol of the process $G(t)$. Now we have the following result for the mean and variance of the process $H(t)$. Let ${\rm Erf}(z) = \frac{2}{\sqrt{\pi}}\int_{0}^{z}e^{-u^2}du$ denotes the error function.

\begin{proposition}
The mean function $M_1(t)$ of $H(t)$ is given by
\bea
M_1(t)=\frac{1}{\delta}\sqrt{\frac{t}{2\pi}}e^{-t\gamma^2/2}+\frac{1}{2\delta\gamma}{\rm Erf}\left(\gamma\sqrt{{t}/{2}}\right)
+\frac{\gamma t}{2\delta}\left(1+{\rm Erf}\left(\gamma\sqrt{{t}/{2}}\right)\right),~ \delta,\gamma>0.
\eea
 The second moment $M_2(t)$ of $H(t)$ is
\begin{equation*}
M_2(t) = \frac{\gamma^2t^2}{\delta^2}+ \frac{2t}{\delta^2}+\frac{1}{\delta^2\gamma^2\sqrt{\pi}}\left[e^{-\gamma^2t/2}(\gamma\sqrt{2t}+\gamma^3\sqrt{2}t^{3/2}) + \sqrt{\pi}(2\gamma^2t+\gamma^4t^2-1){\rm Erf}(\gamma\sqrt{t/2})\right].
\end{equation*}
When $\gamma=0$ and $\delta=1$, we have $M_1(t) = \sqrt{2t/\pi}$ and $M_2(t)= 2t$.
\end{proposition}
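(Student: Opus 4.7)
The plan is to obtain both moments by Laplace inversion of formula (2.9). Substituting $\Psi_G(s)=\delta(\sqrt{\gamma^2+2s}-\gamma)$ and rationalizing via the identity $(\sqrt{\gamma^2+2s}-\gamma)(\sqrt{\gamma^2+2s}+\gamma)=2s$ reduces, for $q=1,2$, the expression $\tilde M_q(s)$ to a finite sum of pieces of the form $s^{-k}$ and $\sqrt{\gamma^2+2s}\,s^{-k}$. Each piece is then inverted using standard tables and operational calculus, with the two key Laplace pairs being $\mathcal{L}[\mathrm{Erf}(\gamma\sqrt{t/2})](s)=\gamma/(s\sqrt{\gamma^2+2s})$ (derived by differentiating the definition of $\mathrm{Erf}$ and applying a frequency shift to $\mathcal{L}[(\pi t)^{-1/2}]=s^{-1/2}$) and $\mathcal{L}[\sqrt{t/(2\pi)}\,e^{-\gamma^2 t/2}](s)=(\gamma^2+2s)^{-3/2}$ (from $\mathcal{L}[\sqrt{t}]=\sqrt{\pi}/(2s^{3/2})$ plus the same shift).

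For $q=1$, rationalization gives $\tilde M_1(s)=\frac{\gamma}{2\delta s^2}+\frac{\sqrt{\gamma^2+2s}}{2\delta s^2}$; the first summand inverts to $\gamma t/(2\delta)$. For the second I would split $\sqrt{\gamma^2+2s}/s^2=\gamma^2/(s^2\sqrt{\gamma^2+2s})+2/(s\sqrt{\gamma^2+2s})$ and use the operational rule $\mathcal{L}[tf(t)]=-F'(s)$ applied to $F(s)=\gamma/(s\sqrt{\gamma^2+2s})$, together with the second key pair above, to identify $\mathcal{L}^{-1}[1/(s^2\sqrt{\gamma^2+2s})]$. Collecting the pieces reproduces exactly the three summands in the stated formula for $M_1(t)$.

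For $q=2$ the same rationalization yields $\tilde M_2(s)=\frac{2\gamma^2}{\delta^2 s^3}+\frac{2}{\delta^2 s^2}+\frac{2\gamma\sqrt{\gamma^2+2s}}{\delta^2 s^3}$, whose first two terms invert immediately to $\gamma^2 t^2/\delta^2$ and $2t/\delta^2$. For the third I would apply $\mathcal{L}[tf(t)]=-F'(s)$ once more, now to $F(s)=\sqrt{\gamma^2+2s}/s^2$ (already inverted in the previous paragraph), which expresses $\sqrt{\gamma^2+2s}/s^3$ as a combination of $t\cdot\mathcal{L}^{-1}[\sqrt{\gamma^2+2s}/s^2]$ and $\mathcal{L}^{-1}[1/(s^2\sqrt{\gamma^2+2s})]$. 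Grouping the resulting $\sqrt{2t/\pi}\,e^{-\gamma^2 t/2}$ and $\mathrm{Erf}(\gamma\sqrt{t/2})$ contributions produces precisely the bracketed expression in the statement. The special case $\gamma=0,\ \delta=1$ follows either from the direct transforms $\tilde M_1(s)=1/(s\sqrt{2s})\leftrightarrow\sqrt{2t/\pi}$ and $\tilde M_2(s)=2/s^2\leftrightarrow 2t$, or by Taylor expanding $\mathrm{Erf}(\gamma\sqrt{t/2})/\gamma\to\sqrt{2t/\pi}$ and $e^{-\gamma^2t/2}\to 1$ as $\gamma\to 0$ in the general formulas.

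The main obstacle is the inversion of $\sqrt{\gamma^2+2s}\,s^{-k}$ for $k=2,3$: these are not single table entries and require combining a frequency shift $s\mapsto s+\gamma^2/2$, repeated application of $-F'(s)\leftrightarrow tf(t)$, and one explicit antiderivative computation (the substitution $v=\gamma^2 u/2$ converts $\int_0^t\sqrt{u}\,e^{-\gamma^2 u/2}du$ into an incomplete gamma that collapses to $\mathrm{Erf}$ plus an exponential). The bookkeeping must be done carefully so that the polynomial-in-$t$ coefficients $1+\gamma^2t$ and $2\gamma^2t+\gamma^4t^2-1$ emerge with the correct signs after all contributions are collected.
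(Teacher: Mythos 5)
Your proposal is correct and follows essentially the same route as the paper: both start from \eqref{moments}, rationalize $\tilde M_q(s)$ into exactly the same sums of $s^{-k}$ and $\sqrt{\gamma^2+2s}\,s^{-k}$ terms, and invert term by term using Erf-based Laplace pairs. The only (immaterial) difference is mechanical: the paper inverts $\sqrt{\gamma^2+2s}\,s^{-k}$ by repeatedly integrating the table entry $\mathcal{L}^{-1}\bigl[\sqrt{s+a}/s\bigr]$ in the time domain, whereas you split off $1/(s^k\sqrt{\gamma^2+2s})$ and differentiate in the $s$-domain via $\mathcal{L}[tf(t)]=-F'(s)$; both reduce to the same antiderivative of $\mathrm{Erf}$ and yield the stated formulas.
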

\noindent\begin{proof} From \eqref{moments},
\beq\label{eq2.24}
\tilde{M_1}(s)= \frac{1}{\delta s(\sqrt{\gamma^2+2s}-\gamma)} =\frac{1}{2\delta}\frac{\sqrt{\gamma^2+2s}}{s^2} + \frac{\gamma}{2\delta}\frac{1}{s^2}.
\eeq
Using the fact(see Roberts and Kaufman (1966), p. 210)
\beq\label{Laplace}
\mathcal{L}^{-1}\left[\frac{\sqrt{s+a}}{s}\right] = \frac{e^{-at}}{\sqrt{\pi t}} +\sqrt{a}\mbox{Erf}(\sqrt{at})
\;\; {\rm and}\;\;
\int_{0}^{z}\mbox{Erf}(y)dy = z\mbox{Erf}(z)+ \frac{e^{-z^2}}{\sqrt{\pi}},
\eeq
we have
\begin{align}\label{eq2.25}
\mathcal{L}^{-1}\left[\frac{\sqrt{s+\gamma^2/2}}{\sqrt{2}\delta s^2}\right] &= \frac{1}{\sqrt{2}\delta}\left[\int_{0}^{t}\frac{e^{-\gamma^2w/2}}{\sqrt{\pi w}} dw+\frac{\gamma}{\sqrt{2}}\int_{0}^{t}\mbox{Erf}(\gamma\sqrt{w}/\sqrt{2})dw\right]\nonumber\\
&= \frac{1}{\delta}\sqrt{\frac{t}{2\pi}}e^{-t\gamma^2/2}+\frac{1}{2\delta\gamma}\mbox{Erf}\left(\gamma\sqrt{{t}/{2}}\right)
+\frac{\gamma t}{2\delta}\mbox{Erf}\left(\gamma\sqrt{{t}/{2}}\right).
\end{align}
Also, $\mathcal{L}^{-1}(1/s^2) = t$. Now the result follows for $M_1(t)$ by using \eqref{eq2.24} with \eqref{eq2.25}.\\
Further, for $M_2(t)$ we have from \eqref{moments}
\begin{align*}
\tilde{M_2}(s)= \frac{1}{\delta^2 s(\sqrt{\gamma^2+2s}-\gamma)^2}
= \frac{1}{\delta^2}\left[\frac{2\gamma^2}{s^3}+\frac{2}{s^2}+2\gamma\frac{\sqrt{\gamma^2+2s}}{s^3}\right].
\end{align*}
Using similar steps as in \eqref{eq2.25}, we obtain
\begin{align}\label{}
\mathcal{L}^{-1}\left[\frac{\sqrt{s+\gamma^2/2}}{s^3}\right] &= \int_{0}^{t}\int_{0}^{\tau}\frac{e^{-\gamma^2w/2}}{\sqrt{\pi w}} dw d\tau+\frac{\gamma}{\sqrt{2}}\int_{0}^{t}\int_{0}^{\tau}\mbox{Erf}(\gamma\sqrt{w}/\sqrt{2})dw d\tau\nonumber\\
&= \frac{1}{2\gamma^3\sqrt{2\pi}}\left[e^{-\gamma^2t/2}(\gamma\sqrt{2t}+\gamma^3\sqrt{2}t^{3/2}) + \sqrt{\pi}(2\gamma^2t+\gamma^4t^2-1)\mbox{Erf}(\gamma\sqrt{t/2})\right].
\end{align}
The result for $M_2(t)$ now follows using the simple fact that $\mathcal{L}^{-1}(1/s^3) = t^2/2$.\\
\noindent When $\gamma=0$ and $\delta=1$, we have $\tilde{M_1}(s) = s^{-3/2}/\sqrt{2}$ and $\tilde{M_2}(s) = 2/s^2$ which imply, $M_1(t) = \sqrt{2t/\pi}$ and $M_2(t)=2t$ respectively.
  \end{proof}
\noindent The following result presents the asymptotic behavior of $M_1(t) $ and Var$(H(t))$.
\begin{corollary}
As $t\rightarrow\infty$, $M_1(t) \sim \gamma t/{\delta}$ for $\gamma>0$, $ M_1(t) \sim\sqrt{2t}/(\delta\sqrt{\pi})$ for $\gamma=0$ and {\rm Var}$(H(t))\sim \gamma^2 t^2/{\delta^2}.$ Further, as $t\rightarrow 0$, $M_1(t)\sim \sqrt{2t}/(\delta\sqrt{\pi})$ and {\rm Var}$(H(t))= o(t^{1/2}).$

\end{corollary}
\noindent\begin{proof}
Using the fact that $\displaystyle\lim_{z\rightarrow\infty}{\rm Erf}(z) = 1$, we have
\bea
\lim_{t\rightarrow\infty}\frac{M(t)}{t} = \lim_{t\rightarrow\infty}\left[\frac{1}{\delta\sqrt{2\pi}}\frac{e^{-t\gamma^2/2}}{\sqrt{t}}+\frac{1}{2\delta\gamma}
\frac{1}{t}\mbox{Erf}\left(\gamma\sqrt{{t}/{2}}\right) + \frac{\gamma}{2\delta}\left(1+\mbox{Erf}
\left(\gamma\sqrt{{t}/{2}}\right)\right)\right] = \frac{\gamma}{\delta}.
\eea
Further, using the fact $\displaystyle\lim_{z\rightarrow 0}$Erf$(z) =0$, we get
\begin{align*}
\lim_{t\rightarrow 0} \frac{M(t)}{\sqrt{t}} &= \lim_{t\rightarrow 0}\left[\frac{1}{\delta\sqrt{2\pi}}e^{-t\gamma^2/2}+\frac{1}{2\delta\gamma}\frac{1}{\sqrt{t}}\mbox{Erf}
\left(\gamma\sqrt{{t}/{2}}\right) + \frac{\gamma}{2\delta}\sqrt{t}\left(1+\mbox{Erf}
\left(\gamma\sqrt{{t}/{2}}\right)\right)\right]\\
& = \frac{1}{\delta\sqrt{2\pi}}+ \frac{1}{2\delta\gamma}\lim_{t\rightarrow 0}\frac{1}{\sqrt{t}}\mbox{Erf}
\left(\gamma\sqrt{{t}/{2}}\right) = \frac{1}{\delta\sqrt{2\pi}}+\frac{1}{\delta\sqrt{2\pi}}
= \frac{1}{\delta}\sqrt{\frac{2}{\pi}}.
\end{align*}
The result for asymptotic behavior of variance of $H(t)$ also follows similarly.
\end{proof}

\noindent Next, we look at the tail behavior of the distribution of $h(x,t)$, which is useful for studying the infinite divisibility.
\begin{theorem}
 The tail probability $\P(H(t)>x)$ decays exponentially, that is,
  \beq\label{tail}
\P(H(t)>x) = O(x^{-1}e^{\delta\gamma x-x^2/{4t}}),\ \ {\rm as} \ x\rightarrow\infty.
\eeq
\end{theorem}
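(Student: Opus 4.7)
The plan is to reduce the tail probability to a Gaussian tail via an explicit change of variables and then read off the decay rate from the classical Mills-ratio asymptotic. First I would invoke the strict monotonicity of the sample paths of $G$ established earlier to rewrite
\[
P(H(t)>x) = P(G(x) < t) = P(G(x) \leq t),
\]
and since $G(x)\sim\mathrm{IG}(\delta x,\gamma)$ has the density displayed in equation (1.1), this gives the one-dimensional integral representation
\[
P(H(t)>x) = \int_0^t \frac{\delta x}{\sqrt{2\pi u^3}}\,\exp\!\left(\delta\gamma x - \frac{\delta^2 x^2}{2u} - \frac{\gamma^2 u}{2}\right) du.
\]

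Next I would apply the substitution $v = \delta x/\sqrt{u}$, which sends the interval $(0,t]$ to $[\delta x/\sqrt{t},\infty)$, absorbs the singular factor $u^{-3/2}\,du$ into a constant multiple of $dv$, and turns the dangerous term $-\delta^2 x^2/(2u)$ into the benign Gaussian $-v^2/2$. A routine computation then yields
\[
P(H(t)>x) = \sqrt{\frac{2}{\pi}}\, e^{\delta\gamma x}\int_{\delta x/\sqrt{t}}^{\infty}\exp\!\left(-\frac{v^2}{2}-\frac{\gamma^2\delta^2 x^2}{2 v^2}\right) dv.
\]
Dropping the nonpositive extra term in the exponent gives the clean upper bound $\sqrt{2/\pi}\,e^{\delta\gamma x}\int_{\delta x/\sqrt{t}}^{\infty}e^{-v^2/2}\,dv$, and conversely, on the integration range one has $v\geq \delta x/\sqrt{t}$, so the same extra term is bounded below by $-\gamma^2 t/2$; this shows the dropped factor contributes only a bounded multiplicative constant and does not affect the leading asymptotic.

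Finally I would invoke the classical tail estimate $\int_z^{\infty} e^{-v^2/2}\,dv \sim z^{-1}e^{-z^2/2}$ as $z\to\infty$, applied with $z=\delta x/\sqrt{t}$. This simultaneously produces the $x^{-1}$ prefactor and the stated exponential rate, completing the proof. I do not anticipate a substantive obstacle: the change of variables is a single line, and the only point requiring minor care is tracking that the discarded quadratic-reciprocal term does not degrade the exponent, which the bound above handles. If one prefers to avoid the substitution, an alternative route is to use the closed-form inverse Gaussian CDF
\[
P(G(x)\leq t) = \Phi\!\left(\gamma\sqrt{t}-\tfrac{\delta x}{\sqrt{t}}\right) + e^{2\delta\gamma x}\,\Phi\!\left(-\gamma\sqrt{t}-\tfrac{\delta x}{\sqrt{t}}\right),
\]
and apply $\Phi(-z)\sim \phi(z)/z$ to each term separately; both terms turn out to contribute at the same order, yielding the same asymptotic.
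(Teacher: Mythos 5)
Your proof is correct, and it takes a genuinely different route from the paper's. The paper works with the density $h(x,t)$ itself, starting from the contour-integral representation \eqref{density}, rewriting the two integrals as Fourier-type integrals $\int_0^\infty \frac{\omega\sin(\cdot)}{\omega^2+\alpha^2}e^{-t\omega^2}d\omega$ and $\int_0^\infty\frac{\omega^2\cos(\cdot)}{\omega^2+\alpha^2}e^{-t\omega^2}d\omega$, invoking an asymptotic lemma of Olver to peel off residue terms $\pi e^{-\alpha x+t\alpha^2}$ (which must and do cancel between the sine and cosine pieces), bounding the remainders by shifting the contour, and finally passing from $h(x,t)$ to the tail via L'H\^opital. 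Your route — using strict monotonicity of $G$ to write $P(H(t)>x)=P(G(x)\le t)$, substituting $v=\delta x/\sqrt{u}$ in the IG density, and reading off the Mills-ratio asymptotic — is considerably more elementary, avoids the delicate cancellation of leading terms entirely, and yields matching upper and lower bounds, i.e.\ a sharp two-sided asymptotic rather than just a big-$O$. The identity $P(H(t)>x)=P(G(x)\le t)$ is exactly the one the paper already uses in the proof of Theorem 2.1, so nothing new is needed to justify it. I verified the change of variables: $u^{-3/2}\,du$ does collapse to $-2(\delta x)^{-1}\,dv$, and the discarded term satisfies $\gamma^2\delta^2x^2/(2v^2)\le\gamma^2 t/2$ on the integration range, as you say.

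One substantive caveat: your computation produces the exponent $e^{-\delta^2x^2/(2t)}$, not the stated $e^{-x^2/(4t)}$, and you should not claim these agree. For $\delta\ge 1/\sqrt{2}$ your bound is stronger and implies the stated one; for $\delta<1/\sqrt{2}$ the stated bound is actually \emph{false} (take $\gamma=0$: by the reflection principle $P(H(t)>x)=2\Phi(-\delta x/\sqrt{t})\sim\sqrt{2t/\pi}\,(\delta x)^{-1}e^{-\delta^2x^2/(2t)}$, which is not $O(e^{-x^2/(4t)})$ when $\delta^2<1/2$). Your exponent is the correct one — the paper's proof loses the factor $\delta\sqrt{2}$ multiplying $x$ when it applies Olver's error estimate, which is where the spurious $x^2/(4t)$ comes from. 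This does not affect the only downstream use of the theorem (Proposition 2.2 on non-infinite-divisibility, which needs only $-\ln P(H(t)>x)/(x\ln x)\to\infty$), but you should state your conclusion as $P(H(t)>x)=O(x^{-1}e^{\delta\gamma x-\delta^2x^2/(2t)})$ rather than asserting the theorem's displayed rate verbatim.
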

\noindent\begin{proof} We have from \eqref{density},
\begin{align}\label{asymptotic}
h(x,t) &= \frac{\delta}{\pi} e^{\delta\gamma x -\frac{\gamma^2}{2}t}\left[2\gamma\int_{0}^{\infty}\frac{\omega e^{-t\omega^2}}{\omega^2+\gamma^2/2}\sin(\delta\sqrt{2}x\omega) d\omega +2\sqrt{2}
 \int_{0}^{\infty}\frac{\omega^2 e^{-t\omega^2}}{\omega^2+\gamma^2/2}\cos(\delta\sqrt{2}x\omega)d\omega\right],
 \end{align}
 obtained by substituting $\omega =\sqrt{y}$.
For a function $q$ such that $q(z)$ is real for real $z$ and $q(z)= O(z^{-\eta})$, $\eta>0$, as Re$(z)\rightarrow\infty$
 (see e.g. Olver(1974), p. 79)
\bea
\int_{\mathbb{R}}\frac{ze^{ixz}}{z^2+\alpha^2}q(z)dz = \pi i e^{-\alpha x}q(i\alpha)+\epsilon(x),
~~{\rm where}~~
\epsilon(x) = \int_{\mathcal{L}}\frac{ze^{ixz}}{z^2+\alpha^2}q(z)dz
\eea
with the line $\mathcal{L}$: $z=y +i w$, $w >\alpha$ and $-\infty<y<\infty$. Taking $q(z)= e^{-tz^2}$ and the corresponding $\epsilon_1(x)$,
\bea
\int_{\mathbb{R}}\frac{ze^{ixz}}{z^2+\alpha^2} e^{-tz^2}dz = \pi i e^{-\alpha x}e^{t\alpha^2}+\epsilon_1(x).
\eea
Comparing the imaginary parts in both sides gives
\begin{align}\label{asy1}
\int_{\mathbb{R}}\frac{z \sin(xz)}{z^2+\alpha^2} e^{-tz^2}dz &= \pi e^{-\alpha x+t\alpha^2}+\epsilon_1(x)\nonumber\\
\implies \int_{0}^{\infty}\frac{z \sin(xz)}{z^2+\alpha^2} e^{-tz^2}dz& = \frac{\pi}{2}e^{-\alpha x+t\alpha^2}+\epsilon_1(x).
\end{align}
Similarity, for $q(z)= ze^{-tz^2}$, we get
\beq\label{asy2}
\int_{0}^{\infty}\frac{z^2\cos(xz)}{z^2+\alpha^2} e^{-tz^2}dz = -\frac{\pi}{2}\alpha e^{-\alpha x}e^{t\alpha^2}+\epsilon_2(x).
\eeq
Also, $\epsilon_1(x) = \epsilon_2(x)= O(x^{-1}e^{-x^2/{4t}})$, as $x\rightarrow\infty$ (see Exercise 6.2 of Olver (1974)).
Now using \eqref{asy1} and \eqref{asy2} with \eqref{asymptotic}, we have
\begin{align}\label{eq2.41}
h(x,t) &= \frac{\delta}{\pi} (2\gamma+2\sqrt{2})e^{\delta\gamma x -\frac{\gamma^2}{2}t}\epsilon_1(x)
= O(x^{-1}e^{\delta\gamma x-x^2/{4t}}), \ \ \rm{as}\ x\rightarrow\infty.
\end{align}
Now
\begin{align*}
\displaystyle\lim_{x\rightarrow\infty}\Big|\frac{\P(H(t)>x)}{x^{-1}e^{\delta\gamma x-x^2/{4t}}}\Big| &=
\lim_{x\rightarrow\infty}\Big|\frac{-h(x,t)}{\frac{d}{dx}(x^{-1}e^{\delta\gamma x-x^2/{4t}})}\Big|~~\mbox{(using L'H$\hat{o}$pital's rule)}\\
&\leq M, ~~M>0,
\end{align*}
which proves the  result.
\end{proof}

\noindent Since the tail probability $\P(H(t)>x)$ decays exponentially, moments of all orders exit for $H(t)$. 

\begin{proposition} \label{dlim}
 The density function $h(x,t)$ satisfies
 \bea
 \lim_{x\rightarrow 0}h(x,t) = h(0,t) = \delta e^{-\gamma^2/2}\left(\sqrt{\frac{2}{\pi t}}-\gamma e^{\gamma^2t/2}{\rm Erfc}\left(\frac{\gamma\sqrt{t}}{\sqrt{2}}\right)\right).
 \eea
  Further, $\lim_{x\rightarrow 0}h_x(x,t) =h_x(0,t) = 2\delta\gamma h(0,t)$
 and $h_x(x,t)=O(N e^{\delta\gamma x-x^2/{4t}})$, as $x\rightarrow\infty$, with $N>0$.
\end{proposition}
\noindent\begin{proof} Note that,
$\displaystyle h(x,t) = \frac{\delta}{\pi}e^{\delta\gamma x-\gamma^2/2}\int_{0}^{\infty}I(t;x,y)dy$,
where $I(t;x,y)$ follows from \eqref{density}. Then
\begin{equation}\label{sin}
\begin{split}
 |I(t;x,y)|&=\Big|\frac{e^{-ty}}{y+\gamma^2/2}[\gamma \sin(\delta x\sqrt{2y})+\sqrt{2y}\cos(\delta x\sqrt{2y})]\Big|\\
 &= \Big|\frac{e^{-ty}}{y+\gamma^2/2}\sqrt{2y+\gamma^2} \sin(\delta x\sqrt{2y}+\theta)\Big|
 \leq \frac{2e^{-ty}}{\sqrt{2y+\gamma^2}},
 \end{split}
\end{equation}
where $\theta = \sin^{-1}\left(\sqrt{2y/(\gamma^2+2y)}\right)$. By dominated convergence theorem, we have
\begin{align*}
\lim_{x\rightarrow 0}h(x,t)& = \frac{\delta}{\pi}e^{-\gamma^2/2}\int_{0}^{\infty}I(t;0,y)dy
= \frac{2\sqrt{2}\delta}{\pi}e^{-\gamma^2/2}\int_{0}^{\infty}\frac{\sqrt{y}e^{-ty}}{2y+\gamma^2}dy\\
&= \frac{\delta}{\pi\sqrt{t}}e^{-\gamma^2/2}\left(\sqrt{2\pi}-\gamma \pi \sqrt{t}e^{\gamma^2t/2}{\rm Erfc}\left(\frac{\gamma\sqrt{t}}{\sqrt{2}}\right)\right)\\
&= \delta e^{-\gamma^2/2}\left(\sqrt{\frac{2}{\pi t}}-\gamma e^{\gamma^2t/2}{\rm Erfc}\left(\frac{\gamma\sqrt{t}}{\sqrt{2}}\right)\right).
\end{align*}
Note that $I(t;x,y)$ and
$
\frac{\partial}{\partial x} (I(t;x,y)) = e^{-ty}\left(\delta\gamma\sqrt{2y}\cos(\delta x\sqrt{2y})-2\delta\gamma \sin(\delta x \sqrt{2y})\right)/({y+\gamma^2/2})
$
are continuous in $x$ and $y$. Following the steps in \eqref{sin}, we have
$|\frac{\partial}{\partial x} I(t;x,y)|\leq{\delta\sqrt{2y}e^{-ty}}/{(y+\gamma^2/2)}$ which is independent of $x$
and $\int_{0}^{\infty}\frac{2\sqrt{2}\delta\sqrt{y}e^{-ty}}{2y+\gamma^2}dy<\infty$.
Hence, $\int_{0}^{\infty}\frac{\partial}{\partial x} I(t;x,y)dy$ is uniformly convergent and so we can differentiate under the integral sign. Thus,
\beq\label{derivative}
h_x(x,t) = \frac{\delta}{\pi}e^{\delta\gamma x-\gamma^2/2}\left[\delta\gamma\int_{0}^{\infty}I(t;x,y)dy+\int_{0}^{\infty}\frac{\partial}{\partial x} I(t;x,y)dy\right]
\eeq
and
\begin{equation*}
 h_x(0,t) = \frac{4\sqrt{2}\delta^2\gamma}{\pi}e^{-\gamma^2/2}\int_{0}^{\infty}\frac{\sqrt{y}e^{-ty}}{2y+\gamma^2}dy= 2\delta\gamma h(0,t).
\end{equation*}
Using \eqref{eq2.41} and an application of  L'H$\hat{o}$pital's rule, we can show that, as $x\rightarrow\infty,$
$h_x(x,t)=O(N e^{\delta\gamma x-x^2/{4t}}),$ for some $N>0$.
\end{proof}

\noindent A consequence of Theorem 2.3 is the following result known in the literature.
\begin{proposition}\label{prop2.2}
The distribution of $H(t)$ is not infinitely divisible.
\end{proposition}
\noindent \begin{proof} Since $\P(H(t)>x) \leq Mx^{-1}e^{\delta\gamma x-x^2/{4t}}$, for large $x$ and
some $M>0$, we have
\begin{align*}
 \lim_{x\rightarrow\infty}\left(\frac{-\ln \P(H(t)>x)}{x \ln x}\right)& \geq \lim_{x\rightarrow\infty}
 \left(\frac{-\ln(\frac{M}{x}e^{\delta\gamma x-x^2/{4t}})}{x \ln x}\right)\rightarrow\infty.
\end{align*}
Hence, by Corollary 9.9 of Steutel and Van Harn (2004, p. 200), $H(t)$ is Gaussian. This is a contradiction
and hence the result follows.
\end{proof}
\noindent Note that when $\gamma=0$, $h(x,t)$ is a folded Gaussian density which is also not infinitely
divisible, a known result (see e.g. p. 126 of Steutel and Van Harn (2004)). The above result can be extended to the first-exit time of a $\beta$-stable subordinator.

\begin{proposition}
The tail probability of the first-exit time process $E(t)$ of the $\beta$-stable subordinator $D(t)$ satisfies
 \begin{equation*}
 \displaystyle \P(E(t)>x) = O\left(\frac{M(1-\beta)}{N}x^{\frac{(2-\beta/2)}{(\beta-1)}-\frac{1}{\beta}}e^{-Nx^{\frac{1}{(1-\beta)}}}\right), ~~\mbox{as}~x\rightarrow\infty.
 \end{equation*}
\end{proposition}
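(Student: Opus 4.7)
The plan is to apply Theorem~2.1 to the stable subordinator and then to carry out two successive Laplace-method estimates. Because $D$ is $\beta$-stable, its L\'evy measure is $\pi_D(dy)=c_\beta y^{-1-\beta}\,dy$ on $(0,\infty)$, giving $\pi_D(t-y,\infty)=(c_\beta/\beta)(t-y)^{-\beta}$, and Theorem~2.1 yields the density of $E(t)$ as
\[
q(u,t)=\frac{c_\beta}{\beta}\int_0^{t}(t-y)^{-\beta}f_{D(u)}(y)\,dy.
\]
By the self-similarity $D(u)\stackrel{d}{=}u^{1/\beta}D(1)$ one has $f_{D(u)}(y)=u^{-1/\beta}g_{\beta}(yu^{-1/\beta})$, where $g_{\beta}$ denotes the density of $D(1)$.

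Next I would invoke the classical saddle-point asymptotic for the left tail of the one-sided stable density (Zolotarev 1986; Uchaikin and Zolotarev 1999),
\[
g_{\beta}(z)\sim A\,z^{-(2-\beta)/(2(1-\beta))}\exp\!\Big(-(1-\beta)(\beta/z)^{\beta/(1-\beta)}\Big),\qquad z\to 0^{+},
\]
with an explicit $A=A(\beta)$. Setting $z=yu^{-1/\beta}$, the exponent becomes $-C(y)u^{1/(1-\beta)}$ with $C(y)=(1-\beta)(\beta/y)^{\beta/(1-\beta)}$, which is strictly decreasing in $y$ on $(0,t]$. Hence on $[0,t]$ its minimum is attained at $y=t$, with value
\[
N=(1-\beta)(\beta/t)^{\beta/(1-\beta)}.
\]
The inner $y$-integral is then a boundary-Laplace integral whose maximum sits at the singular endpoint $y=t$; substituting $s=t-y$ and linearising $C(t-s)\approx N+|C'(t)|\,s$, the Watson-type estimate $\int_0^{\infty}s^{-\beta}e^{-\lambda s}\,ds=\Gamma(1-\beta)\lambda^{\beta-1}$ with $\lambda=|C'(t)|u^{1/(1-\beta)}$, combined with the algebraic prefactor coming from the stable density, gives
\[
q(u,t)\sim M\,u^{r}\exp\!\big(-N u^{1/(1-\beta)}\big)\qquad (u\to\infty),
\]
for explicit $r=r(\beta)$ and $M=M(\beta,t)$.

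Finally, I would integrate: $P(E(t)>x)=\int_x^{\infty}q(u,t)\,du$. The substitution $w=Nu^{1/(1-\beta)}$ converts the tail integral into an incomplete gamma integral, whose large-$x$ asymptotic is
\[
\int_x^{\infty}u^{r}e^{-N u^{1/(1-\beta)}}\,du\sim \frac{1-\beta}{N}\,x^{r-\beta/(1-\beta)}\,e^{-Nx^{1/(1-\beta)}},
\]
which produces the stated $O$-bound once the algebraic exponent $r-\beta/(1-\beta)$ is simplified to $\tfrac{2-\beta/2}{\beta-1}-\tfrac{1}{\beta}$. The principal obstacle is the boundary-Laplace step: because $C(y)$ attains its minimum exactly at $y=t$, where the prefactor $(t-y)^{-\beta}$ diverges, one must simultaneously justify extending the $s$-range to infinity (the exponential factor dominates for large $u$) and carefully track the algebraic prefactor so that the resulting constants $M,N$ and the power of $x$ match those in the statement; a secondary subtlety is in invoking the small-$z$ stable asymptotic uniformly in $y\in[0,t]$, which requires the standard Watson-lemma justification of term-by-term integration against the exponential weight.
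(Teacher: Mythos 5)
Your route is genuinely different from the paper's, and considerably heavier. The paper does not go through Theorem~2.1 at all: it starts from the closed-form scaling representation of the inverse-stable density, $m(x,t)=\frac{t}{\beta}x^{-1-1/\beta}f(tx^{-1/\beta},1)$ (quoted from Meerschaert and Scheffler), observes that $tx^{-1/\beta}\to 0$ as $x\to\infty$, inserts the same Uchaikin--Zolotarev left-tail bound $f(z,1)\leq Kz^{(1-\beta/2)/(\beta-1)}\exp(-(1-\beta)(z/\beta)^{\beta/(\beta-1)})$ that you invoke, and then passes from the density bound to the tail bound by L'H\^opital's rule. That is a three-line argument; the entire Laplace-method apparatus in your plan is replaced by a single substitution. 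Your plan is not wrong --- the constant $N=(1-\beta)(t/\beta)^{\beta/(\beta-1)}$ you extract from the minimum of $C(y)$ at $y=t$ is exactly the paper's $N$, the Watson-lemma factor $\lambda^{\beta-1}\propto u^{-1}$ reproduces precisely the explicit $u^{-1}$ visible in the scaling formula, and your closing integration by parts is equivalent to the paper's L'H\^opital step --- but the one step you yourself flag as the principal obstacle (the endpoint Laplace estimate where the minimum of the exponent coincides with the integrable singularity $(t-y)^{-\beta}$, together with uniformity of the small-argument stable asymptotic over $y\in(0,t]$) is exactly the part you have not executed, and it is the only part that requires real work. It can be pushed through (it is a standard Watson-type lemma for an algebraic endpoint factor, and the contribution from $y$ bounded away from $t$ is exponentially negligible since $C(y)>N$ there), so I would classify your proposal as a viable but incomplete alternative proof; if you want to finish it economically, note that the self-similarity you already use gives $f_{D(u)}(y)=u^{-1/\beta}g_\beta(yu^{-1/\beta})$ and, more directly, the exact identity for the hitting-time density, which lets you skip the convolution representation entirely and land on the paper's argument.
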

\noindent\begin{proof} Let $f(x,1)$ denote the density
function of a $\beta$-stable $(0<\beta< 1)$ random variable
$D(1)$. Then $m(x,t) =
\frac{t}{\beta}x^{-1-1/{\beta}}f(tx^{-1/{\beta}}, 1), ~x>0,$
is the density function of the first-exit time process $E(t)$ of
the process $D(t)$, defined by $E(t) = \inf\{x>0:
D(x)>t\}$ (see e.g. Meerschaert and Scheffler (2004)). Then, from (Uchaikin and Zolotarev (1999))
\bea
 f(x,1)\leq
Kx^{(1-\beta/2)/(\beta-1)}\exp(-(1-\beta)(x/\beta)^{\beta/(\beta-1)}),
\eea for $x$ sufficiently small. Hence, as $x\rightarrow\infty,$
 $m(x,t)\leq
K(tx^{-1/\beta})^{(1-\beta/2)/(\beta-1)}\exp(-(1-\beta)(tx^{-1/{\beta}}/\beta)^{\beta/(\beta-1)}).$
In other words \beq
m(x,t)=O\left(Mx^{(1-\beta/2)/(\beta-1)-1-1/(\beta)}
\exp(-Nx^{1/(1-\beta)})\right),
\eeq where $M=(K/{\beta})t^{(1-\beta/2)/(\beta-1)+1}$ and
$N=(1-\beta)(t/\beta)^{\beta/{(\beta-1})}$. Thus, we obtain
\begin{align*}
&\lim_{x\rightarrow\infty}\Bigg|\frac{\P(E(t)>x)}{(M(1-\beta)/N)x^{(2-\beta/2)/(\beta-1)-1/\beta}\exp(-Nx^{1/(1-\beta)})}\Bigg|
~~\left(\left(\frac{0}{0}\right)~\mbox{form}\right)\\
&=\lim_{x\rightarrow\infty}\Bigg|\frac{-f(x,t)}{(M(1-\beta)/N)\frac{d}{dx}\left(x^{(2-\beta/2)/(\beta-1)-1/\beta}\exp(-Nx^{1/(1-\beta)})\right)}\Bigg| =1.
\end{align*}
The result follows now.
\end{proof}
\noindent As application  of the above result is the following.
\begin{proposition}\label{id-hitting-time}
 The distribution function of $E(t)$ is not infinitely divisible.
\end{proposition}
\begin{proof} Since $$\P(E(t)>x)\leq
\frac{M(1-\beta)}{N}x^{(2-\beta/2)/(\beta-1)-1/\beta}\exp(-Nx^{1/(1-\beta)})$$
for large $x$, we have \beq \ln \P(E(t)>x)\leq
\ln(M(1-\beta))-\ln N + ((2-\beta/2)/(\beta-1)-1/\beta) \ln x
-Nx^{1/(1-\beta)}. \eeq

\noindent Since, $0<\beta<1$,
\begin{align*}
\lim_{x\rightarrow\infty}\left(\frac{-\ln \P(E(t)>x)}{x \ln x}\right)\geq \lim_{x\rightarrow\infty}\left(\frac{Nx^{1/(1-\beta)}}{x \ln x}\right)
= \lim_{x\rightarrow\infty}\frac{Nx^{\beta/(1-\beta)}}{\ln x} = \infty,
\end{align*}
and hence by using a similar argument as in Proposition \ref{prop2.2}, $E(t)$ is not infinitely divisible.
\end{proof}

\begin{remark}
Let $D_1(t), D_2(t), \cdots, D_n(t)$ be $n$ independent standard stable processes with indices $\beta_1, \beta_2,\cdots, \beta_n$ respectively and let  $E_1(t), E_2(t), \cdots, E_n(t)$ be the corresponding inverse stable processes. Now $\E e^{-sD_1(D_2(t))} = \E e^{-D_2(t)
s^{\beta_1}}= e^{-ts^{\beta_1\beta_2}}$, and hence $D^{*2}(t)= D_1\circ D_2(t) = D_1(D_2(t))$ is also stable with index $\beta_1\beta_2.$ More generally, the composition $D^{*n}(t)=D_1\circ D_2\circ \cdots \circ D_n(t)$ is also stable with index $\beta_1\beta_2\cdots\beta_n$. Let $h^*(x,t)$ be the density function of 
$E^*(t) = \inf\{s>0: D_1(D_2(s))>t\}$. Then using \eqref{l1}, $\mathcal{L}_t(h^*(x,t)) = s^{\beta_1\beta_2-1}e^{-xs^{\beta_1\beta_2}}$. Also, let $h(x,t)$ be the density function of $E_1(E_2(t))$. Let $h_1(x,t)$ and $h_2(x,t)$ be the densities of $E_1(t)$ and $E_2(t)$ respectively. Then, with $u=s^{\beta_2}$,
\begin{align*}
\mathcal{L}_t(h(x,t)) &= \int_{0}^{\infty}h_1(x,r)\mathcal{L}_t(h_2(r,t))dr = \int_{0}^{\infty}h_1(x,r)s^{\beta_2-1}e^{-ur}dr\\
& = s^{\beta_2-1}\int_{0}^{\infty}h_1(x,r)e^{-ur}dr \\
& =s^{\beta_2-1}e^{-xu^{\beta_1}}u^{\beta_1-1} = s^{\beta_1\beta_2-1}e^{-xs^{\beta_1\beta_2}}.		
\end{align*} 
This implies $h^{*}(x,t) = h(x,t)$ or $E^{*}(t) \stackrel{d}= E_1(E_2(t))$.
Let $E^{**}(t) = \inf\{s>0: D^{*n}(s)>t\}$ be the first-exit time of the composition $D^{*n}(t)$. Then $E^{**}(t) \stackrel{d}= E^{*n}(t)$. 
Using a similar argument as in Proposition \ref{id-hitting-time}, it follows that the process $E^{*n}(t)\stackrel{d}=E^{**}(t)$  is also not infinitely divisible.
\end{remark}

\noindent The sample paths of IG process and its first-exit time process $H(t)$ are generated by using the algorithm given by Michael, Schucany and Hass (see Cont and Tankov (2004), p. 182 or Devroye (1986)) and are presented in Figure 2.

\begin{figure}[h]
\centering{\includegraphics[width=.7\textwidth, height =5.5cm]{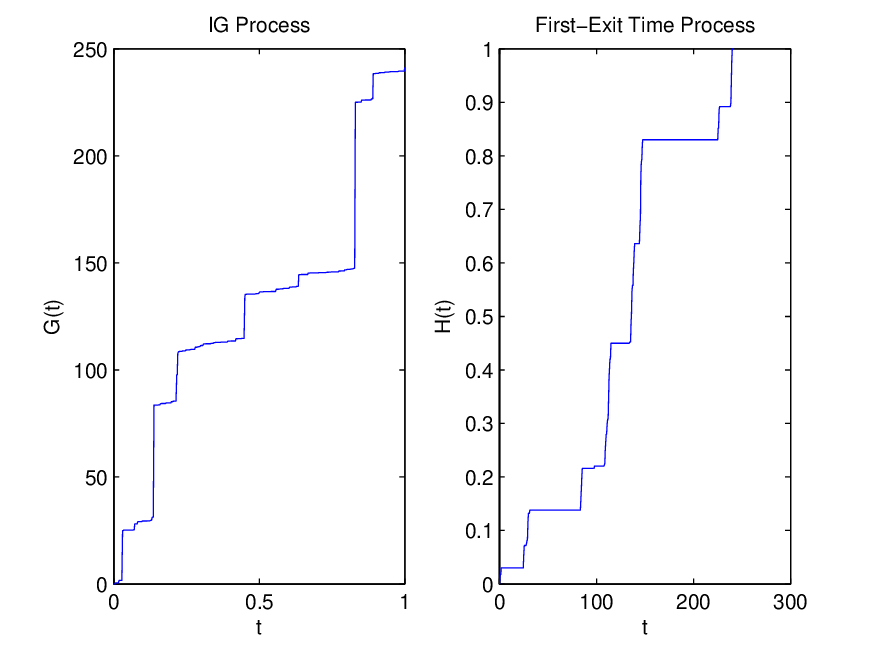}}
\caption{ \it Sample paths of IG process $G(t)$ for $\delta = \gamma=1$ and the corresponding first-exit time process $H(t)$.}
\end{figure}

\setcounter{equation}{0}
\section{The PDE Connection}

In this section, we obtain the PDE satisfied by the density function $h(x,t)$. Let us, for convenience, denote by $\tilde{h}(x,s)$ and
$\hat{h}(u,t)$  the LT and FT of $h$ respectively. Now we have the following result.
\begin{theorem} \label{thm3.1}
The densities $h(x,t)$ of the process $H(t)$ solve
\beq\label{pde}
\frac{\partial^2 h}{\partial x^2} - 2\delta\gamma \frac{\partial h}{\partial x} =  2 \delta^2 \frac{\partial h}{\partial t}
+2\delta^2h(x,0)\delta_{0}(t),
\eeq
where $\delta_0(t)$ is the Dirac's delta function.
\end{theorem}
\noindent\begin{proof} We have from \eqref{lthxt}
\bea
\tilde{h}(x,s) = \frac{\delta}{s}(\sqrt{\gamma^2+2s}-\gamma) e^{-\delta x(\sqrt{\gamma^2+2s}-\gamma)}.
\eea
This implies
\beq\label{eq33}
\frac{\partial}{\partial x}\tilde{h}(x,s) = -\delta(\sqrt{\gamma^2+2s}-\gamma)\tilde{h}(x,s),
\;{\mbox and}\;
\frac{\partial^2}{\partial x^2}\tilde{h}(x,s) = \delta^2(\sqrt{\gamma^2+2s}-\gamma)^2\tilde{h}(x,s).
\eeq
Using \eqref{eq33}, we get
\bea
\left(\frac{\partial^2}{\partial x^2} - 2\delta\gamma \frac{\partial}{\partial x}\right)\tilde{h}(x,s) = 2\delta^2(s \tilde{h}(x,s)-h(x,0)) + 2\delta^2h(x,0).
\eea
After inverting the above LT and using $\mathcal{L}_t(\delta_0(t)) =1)$, we get
\bea
\frac{\partial^2 h}{\partial x^2} - 2\delta\gamma \frac{\partial h}{\partial x} =  2 \delta^2 \frac{\partial h}{\partial t}+
2\delta^2h(x,0)\delta_0(t).
\eea
\end{proof}

\begin{remark}
The density function $g(u,t)$ of $G(t)$ satisfies the following PDE (see Kumar {\it et. al.} (2011))
\beq\label{igpde}
\frac{\partial^2 g}{\partial t^2} - 2\delta\gamma \frac{\partial g}{\partial t} =  2 \delta^2 \frac{\partial g}{\partial u}.
\eeq
Note that \eqref{pde} has a close connection with equation \eqref{igpde}.
\end{remark}

\noindent The result in Theorem \ref{thm3.1} can be generalized to the first-exit times of tempered stable subordinators.
It is well known that for a stable distribution with index $0<\beta<1$, the $p$-th moment is infinite for $p\geq \beta$, and its tail probability decays polynomially (see e.g. Samorodnitsky and Taqqu (2000)). To overcome this limitation, tempered stable (TS) distributions are introduced, which are infinitely divisible and hence correspond to a L\'evy process. Let $f(x,t)$ denote the density function of a $\beta$-stable ($0<\beta<1$) process $D(t)$ with
LT
\bea
\int_{0}^{\infty}e^{-sx}f(x,t)dx = e^{-ts^{\beta}}.
\eea

\noindent A tempered stable subordinator $D_{\mu}(t)$ has a density $f_{\mu}(x,t)= e^{-\mu x+\mu^{\beta}t}f(x,t),~~ \mu>0$. The L\'evy measure corresponding to a tempered stable process is $\pi_{D_{\mu}}(x) = cx^{-\beta-1}e^{-\mu x},~c>0, ~x>0$ (see e.g. Cont and Tankov (2004, p. 115)).
Since $\int_{0}^{\infty}\pi_{D_{\mu}}(x)dx = \infty$, Theorem 21.3 of Sato (1999) implies that the sample paths of $D_{\mu}(t)$ are strictly increasing, since jumping times are dense in $(0,\infty)$. Further
\beq\label{ts-lt}
\int_{0}^{\infty}e^{-sx}f_{\mu}(x,t)dx = e^{-t((s+\mu)^{\beta}-\mu^{\beta})}.
\eeq
Next we have the following result. For similar results in other contexts, see Nane (2008).
\begin{proposition} For $\beta =\frac{1}{n}, ~n\geq 2$,
the density function $m_{\mu}(x,t)$ of the first-exit time process of $D_{\mu}(t)$ satisfies the following PDE:
\bea
\sum_{j=1}^{n}(-1)^j {\binom{n}{j}} \mu^{(1-j/n)}\frac{\partial^j}{\partial x^j}m_{\mu}(x,t) = \frac{\partial}{\partial t}m_{\mu}(x,t)+\delta_0(t)m_{\mu}(x,0).
\eea
\end{proposition}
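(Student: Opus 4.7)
The plan is to mirror the argument used in Theorem 3.1, but with the Laplace symbol $\Psi_{D_\mu}(s) = (s+\mu)^\beta - \mu^\beta$ in place of $\delta(\sqrt{\gamma^2+2s}-\gamma)$. First I would invoke the general formula \eqref{l1} from Theorem 2.1, which (together with $\int_0^\infty e^{-sx}f_\mu(x,t)dx = e^{-t((s+\mu)^\beta-\mu^\beta)}$) gives
\beq
\tilde m_\mu(x,s) \;=\; \frac{(s+\mu)^\beta - \mu^\beta}{s}\,e^{-x((s+\mu)^\beta - \mu^\beta)}.
\eeq
Setting $u = u(s) := (s+\mu)^{1/n} - \mu^{1/n}$ (for $\beta = 1/n$), this reads $\tilde m_\mu(x,s) = (u/s)e^{-xu}$, so differentiating in $x$ gives the clean recursion
\beq
\frac{\partial^j}{\partial x^j}\tilde m_\mu(x,s) \;=\; (-1)^j u^j\, \tilde m_\mu(x,s), \qquad j\ge 0.
\eeq

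The central algebraic step is to eliminate the fractional exponent. Because $u + \mu^{1/n} = (s+\mu)^{1/n}$, raising both sides to the $n$-th power and expanding via the binomial theorem yields
\beq
s + \mu \;=\; (u+\mu^{1/n})^n \;=\; \sum_{j=0}^{n}\binom{n}{j}\mu^{(n-j)/n}u^j \;=\; \mu + \sum_{j=1}^{n}\binom{n}{j}\mu^{1-j/n}u^j,
\eeq
so that $s = \sum_{j=1}^{n}\binom{n}{j}\mu^{1-j/n}u^j$. Multiplying by $\tilde m_\mu(x,s)$ and substituting the recursion above gives
\beq
s\,\tilde m_\mu(x,s) \;=\; \sum_{j=1}^{n}(-1)^j\binom{n}{j}\mu^{1-j/n}\,\frac{\partial^j}{\partial x^j}\tilde m_\mu(x,s).
\eeq

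Finally, I would invert the Laplace transform in $t$, using $\mathcal{L}_t^{-1}(s\tilde m_\mu(x,s)) = \partial_t m_\mu(x,t) + m_\mu(x,0)\delta_0(t)$ (the standard initial-value correction, exactly as in the proof of Theorem 3.1), which produces the stated PDE. The main obstacle is the algebraic manipulation at the binomial-expansion step: one must notice that writing $u+\mu^{1/n}=(s+\mu)^{1/n}$ and taking the $n$-th power is precisely what converts the fractional-power symbol into an integer-order polynomial in $u$, and hence into integer-order $x$-derivatives after the identification $u^j\tilde m_\mu = (-1)^j \partial_x^j \tilde m_\mu$. Once that observation is in place, the remainder of the argument is a direct transcription of the Theorem 3.1 proof, and the restriction $\beta = 1/n$ is exactly what makes this polynomial identity available.
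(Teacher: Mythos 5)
Your proposal is correct and follows the same strategy as the paper's proof: pass to the Laplace transform in $t$, use $\frac{\partial^j}{\partial x^j}\tilde m_\mu(x,s)=(-1)^j u^j\,\tilde m_\mu(x,s)$ with $u=(s+\mu)^{1/n}-\mu^{1/n}$, convert $s$ into a polynomial in $u$, and invert with the initial-value correction $\mathcal{L}_t^{-1}(s\tilde m_\mu)=\partial_t m_\mu+m_\mu(x,0)\delta_0(t)$. The one genuine difference is that the paper announces a proof ``by induction'' but in fact only verifies the cases $n=2$ and $n=3$ by hand and then asserts that the general case ``follows summarily''; your binomial identity
\[
s=(u+\mu^{1/n})^n-\mu=\sum_{j=1}^{n}\binom{n}{j}\mu^{1-j/n}u^j
\]
is exactly the missing general argument, so your version is the more complete one: it replaces the case-checking by a single algebraic identity valid for every $n\ge 2$ and makes transparent why the restriction $\beta=1/n$ is what allows the fractional symbol to be traded for integer-order $x$-derivatives. (Minor point in your favor: you also silently correct the paper's typo $e^{-t(\cdots)}$ to $e^{-x(\cdots)}$ in the expression for $\tilde m_\mu(x,s)$.)
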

\begin{proof}
 We prove this result by induction. Using \eqref{l1} and \eqref{ts-lt} to get
 \begin{equation*}
 \tilde{m}_{\mu}(x,s) = \frac{1}{s}((s+\mu)^{\beta}-\mu^{\beta})e^{-t((s+\mu)^{\beta}-\mu^{\beta})}.
 \end{equation*}
 For $n=2$,
 \beq\label{ts1}
 \frac{\partial}{\partial x}\tilde{m}_{\mu}(x,s) = -((s+\mu)^{1/2}-\mu^{1/2})\tilde{m}_{\mu}(x,s) \;{\mbox and}\; \frac{\partial^2}{\partial x^2}\tilde{m}_{\mu}(x,s) = ((s+\mu)^{1/2}-\mu^{1/2})^2\tilde{m}_{\mu}(x,s).
 \eeq
 Using \eqref{ts1}, we get
 \begin{align*}
 \left( \frac{\partial^2}{\partial x^2}-2\mu^{1/2}\frac{\partial}{\partial x}\right)\tilde{m}_{\mu}(x,s) =(s\tilde{m}_{\mu}(x,s)-m_{\mu}(x,0))+ m_{\mu}(x,0).
 \end{align*}
Inverting the LT to get
\bea
\left( \frac{\partial^2}{\partial x^2}-2\mu^{1/2}\frac{\partial}{\partial x}\right)m_{\mu}(x,t) = \frac{\partial}{\partial t}m_{\mu}(x,t) + m_{\mu}(x,0).
\eea
Similarly, for $n=3$
\bea
\left(\frac{\partial^3}{\partial x^3}-3\mu^{1/3}\frac{\partial^2}{\partial x^2}+ 3\mu^{2/3}\frac{\partial}{\partial x}\right)m_{\mu}(x,t) = (-1)^3\left(\frac{\partial}{\partial t}m_{\mu}(x,t) + m_{\mu}(x,0)\right).
\eea
The result now follows in a similar manner for a general $n$.
\end{proof}

\noindent We next return to the discussion of inverse Gaussian and its first-exit time processes. 
\noindent We now obtain the fractional PDE satisfied by $h(x, t)$, the density function of $H(t)$. To this end,
note that
 the Riemann-Liouville fractional derivative of order $\beta$  $(0<\beta<1)$ is defined by (see e.g. Podlubny (1999))
\beq \label{R-L}
  \frac{\partial^{\beta}}{\partial t^{\beta}}g(t)= \frac{1}{\Gamma(1-\beta)}\frac{d}{dt}\int_{0}^{t}\frac{g(s)ds}{(t-s)^{\beta}}.
  \eeq
Also,  the Caputo fractional derivative of order $0<\beta<1$ is defined by
\beq\label{Cap}
\left(\frac{\partial}{\partial t}\right)^{\beta}f(t) = \frac{1}{\Gamma(1-\beta)}\int_{0}^{t}\frac{f'(r)}{(t-r)^{\beta}}dr,
\eeq
where $f'$ denotes the derivative of $f$.  Since $L[t^{-\beta}] = s^{\beta-1}/\Gamma(1 - \beta)$, it follows that
\beq\label{Lap-RL}
\mathcal{L}_t\left[\frac{\partial^{\beta}}{\partial t^{\beta}}g(t)\right] = s^{\beta} \tilde{g} (s)
\eeq
and the Caputo fractional derivative in \eqref{Cap} has Laplace transform $s^{\beta} \tilde{f} (s)-s^{\beta-1} f(0).$
                                                                        ̃
Hence, the two derivatives in \eqref{R-L} and \eqref{Cap} are related by
 \bea
 \frac{\partial^{\beta}}{\partial t^{\beta}}g(t) = \left(\frac{\partial}{\partial t}\right)^{\beta}g(t) + \frac{g(0)t^{-\beta}}{\Gamma(1-\beta)}.
\eea
Similar to Baeumer and Meerschaert (2010), define the Riemann-Liouville tempered fractional derivative of order $0 < \beta < 1$ by
\bea
\frac{\partial^{\beta,\lambda}}{\partial t^{\beta, \lambda}}g(t) = e^{-\lambda t}\frac{1}{\Gamma(1-\beta)}\frac{d}{dt}\int_{0}^{t}\frac{e^{\lambda s}g(s)ds}{(t-s)^{\beta}} - \lambda^{\beta}g(t).
\eea
We have the following result.
\begin{proposition}
 The density function $h(x,t)$ of $H(t)$ satisfy also the following pseudo-fractional PDE, namely,
 \bea
 \partial_xh(x,t)+\sqrt{2}\delta \frac{\partial^{1/2,\lambda}}{\partial t^{1/2, \lambda}}h(x,t) = 0, 
 \eea
 with $\lambda = \gamma^2/2$.
For $\delta=1$ and $\gamma=0$, the density function $h(x,t)$ satisfies
\beq\label{particular}
\frac{\partial}{\partial x}h(x,t) +\sqrt{2}\left(\frac{\partial}{\partial t}\right)^{1/2}h(x,t) + \sqrt{\frac{2}{\pi t}}h(x,0)=0.
\eeq
\end{proposition}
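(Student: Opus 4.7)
The plan is to start from the Laplace transform formula \eqref{prop2.1} for $h$ and recognize that the first identity in the proposition is nothing more than the operator-valued shadow of an identity already obtained inside the proof of Theorem 3.1. Specifically, equation (3.3) gives
\beq
\frac{\partial}{\partial x}\tilde{h}(x,s) = -\delta\bigl(\sqrt{\gamma^2+2s}-\gamma\bigr)\tilde{h}(x,s).
\eeq
Formally promoting $s$ to the time-derivative operator $\partial_t$ via the Laplace correspondence $s \leftrightarrow \partial_t$, and interpreting the square root via the spectral/pseudo-differential calculus (i.e., the operator whose Laplace symbol is $\sqrt{\gamma^2+2s}$), one reads off the first displayed equation immediately. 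So for the first part no new work is needed beyond naming the operator $\sqrt{\gamma^2+2\partial_t}$.

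For the special case $\delta=1,\gamma=0$, I would specialize the Laplace identity to
\beq
\frac{\partial}{\partial x}\tilde{h}(x,s) = -\sqrt{2s}\,\tilde{h}(x,s),
\eeq
and then invert term by term in a way that respects the initial data. The key tool is the Laplace rule for the Caputo half-derivative,
\beq
\mathcal{L}_t\!\left[\frac{\partial^{1/2}}{\partial t^{1/2}}h(x,t)\right] = s^{1/2}\tilde{h}(x,s) - s^{-1/2} h(x,0),
\eeq
which rearranges to $s^{1/2}\tilde{h}(x,s) = \mathcal{L}_t[\partial_t^{1/2} h] + s^{-1/2} h(x,0)$. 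Multiplying by $\sqrt{2}$ and substituting into the Laplace identity gives
\beq
\frac{\partial}{\partial x}\tilde{h}(x,s) + \sqrt{2}\,\mathcal{L}_t[\partial_t^{1/2}h] + \sqrt{2}\,s^{-1/2} h(x,0) = 0.
\eeq
Since $\mathcal{L}_t^{-1}(s^{-1/2}) = 1/\sqrt{\pi t}$, inverting yields the stated equation \eqref{particular}.

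The step I expect to require the most care is the interpretation of $\sqrt{\gamma^2+2\partial_t}$ in the first equation, because a fractional/pseudo-differential operator of this form has no completely canonical time-domain formula: different choices (Riemann--Liouville, Caputo, or purely symbolic via Laplace symbols) absorb the initial data differently. The cleanest resolution is to declare the operator by its Laplace symbol, so that the first equation is a compact symbolic rewrite of \eqref{eq33}, and to let the $\gamma=0$ specialization be the place where one explicitly commits to the Caputo convention; this is exactly why the initial-data term $\sqrt{2/(\pi t)}\,h(x,0)$ appears in \eqref{particular} but is suppressed in the general identity. Apart from this interpretive point, the remaining calculations are routine Laplace inversion.
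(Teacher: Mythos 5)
Your proposal is correct and follows essentially the same route as the paper: the paper reaches the Laplace-domain identity $\partial_x\tilde h(x,s)+\delta\bigl(\sqrt{\gamma^2+2s}-\gamma\bigr)\tilde h(x,s)=0$ by starting from the double transform $\bar h(u,s)$ in \eqref{llt} and inverting in $u$, whereas you read it off directly from \eqref{eq33}, which is an immaterial difference. From that point on, your use of the Caputo Laplace rule \eqref{feq} and the inversion of $s^{-1/2}$ to produce the $\sqrt{2/(\pi t)}\,h(x,0)$ term coincides with the paper's argument, including the understanding that the general equation is to be read purely symbolically through the Laplace symbol.
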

\noindent \begin{proof} Using \eqref{llt}, the LLT of $h(x,t)$ satisfies
\beq\label{eq3.4}
u\bar{h}(u,s) +\delta(\sqrt{\gamma^2+2s}-\gamma)\bar{h}(u,s) = s^{-1}\delta(\sqrt{\gamma^2+2s}-\gamma).
\eeq
Since $\tilde{h}(0,s) = s^{-1}\delta(\sqrt{\gamma^2+2s}-\gamma)$, we have
\bea
u\bar{h}(u,s) - \tilde{h}(0,s) +\delta(\sqrt{\gamma^2+2s}-\gamma)\bar{h}(u,s) = 0.
\eea
Invert the LT with respect to variable $u$ to get
\beq\label{lt-s}
\partial_x\tilde{h}(x,s) +\delta(\sqrt{\gamma^2+2s}-\gamma)\tilde{h}(x,s) = 0.
\eeq
\noindent Using \eqref{R-L}, \eqref{Lap-RL} and  $\mathcal{L}_t(e^{\lambda t}g(t)) = \tilde{g}(s-\lambda)$, we get
\beq\label{temp-RL}
\mathcal{L}_t\left[\frac{\partial^{1/2,\lambda}}{\partial t^{1/2, \lambda}}g(t)\right] = \mathcal{L}_t\left[e^{-\lambda t} \frac{d^{1/2}}{dt^{1/2}}(e^{\lambda t}g(t))-\lambda^{1/2}g(t)\right] = \left((s+\lambda)^{1/2} - \lambda^{1/2}\right)\tilde{g}(s).
\eeq
Using \eqref{temp-RL}, invert the LT with respect to variable $s$ in \eqref{lt-s} to obtain
\beq\label{hig}
 \partial_xh(x,t)+\sqrt{2}\delta \frac{\partial^{1/2,\lambda}}{\partial t^{1/2, \lambda}}h(x,t) = 0,
 \eeq
with $\lambda = \gamma^2/2$.
Further, for $\delta =1$ and $\gamma=0$, we have from \eqref{eq3.4}
\bea
u\bar{h}(u,s) +\sqrt{2s}\bar{h}(u,s) = \sqrt{\frac{2}{s}},\;\; {\rm which \;implies}\;\;
u\bar{h}(u,s) - \tilde{h}(0,s) +\sqrt{2s}\bar{h}(u,s) = 0.
\eea
Invert the above LT to obtain
\bea
 \frac{\partial}{\partial x}\tilde{h}(u,s) +\sqrt{2s}\tilde{h}(u,s)=0.
\eea
Thus
\bea
 \frac{\partial}{\partial x}\tilde{h}(u,s) +\sqrt{2}\left(\sqrt{s}\tilde{h}(u,s) - \frac{h(x,0)}{\sqrt{s}}\right) + \sqrt{2}\frac{h(x,0)}{\sqrt{s}}=0.
\eea
Inverting the above LT and using (see e.g. Podlubny (1999), p. 106)
\beq\label{feq}
\mathcal{L}_t\left[\frac{\partial^{\beta}}{\partial t^{\beta}}f(x,t)\right]=s^{\beta}\mathcal{L}_t[f(x,t)]-s^{\beta-1}f(x,0),\ 0<\beta<1,
\eeq
the result follows.
\end{proof}

\begin{remark}
 The density function of $G(t)$ has FT $\hat{g}(u,t) = e^{-\delta t(\sqrt{\gamma^2+2iu}-\gamma)}$ (see e.g. Applebaum (2009), p.54). Then
 $\frac{\partial}{\partial t} \hat{g}(u,t) = -\delta (\sqrt{\gamma^2+2iu}-\gamma)\hat{g}(u,t),$
 and after inverting the FT
 \beq\label{fig}
 \partial_t g(x,t) = -\delta (\sqrt{\gamma^2+2\partial_x}-\gamma)g(x,t).
 \eeq	
In particular for $\delta =1/\sqrt{2}$ and
 $\gamma=0$, the LT of $g(x,t)$ with respect to the variable $x$ is
 $\tilde{g}(s,t)= e^{- t\sqrt{s}}.$
 Using the arguments as in Proposition 3.1 and the fact that $g(0,t) = 0$, we have
 \bea
\frac{\partial}{\partial t}g(x,t) +\left(\frac{\partial}{\partial x}\right)^{1/2}g(x,t)=0,
\eea
 which is the governing equation of $\frac{1}{2}$-stable subordinator, the analogue of inverse Gaussian for $\delta = 1/\sqrt{2}$ and $\gamma=0.$ 
\end{remark}

\noindent Finally, we look at certain subordinated process. Note the subordinated processes have interesting connections to PDE (see e.g. Allouba (2002); Allouba and Zheng (2001); Baeumer {\it et.\ al.} (2009), Nane (2008)). We consider here the subordinated process $X(t) = B(H(t))$, where $H(t)$ is the first-exit time of the inverse Gaussian process and $B(t)$ is the standard Brownian motion. Let $u(x, t)$ denotes the density function of the process $X(t)$.
Let $h_x$ denotes the partial derivative of $h$ with respect to $x$.

\begin{theorem}\label{th4.2}
The densities $u(x,t)$ of the subordinated process $X(t)$ satisfies the following PDE
 \begin{align}
2\delta^2\frac{\partial }{\partial t}u(x,t) & = \Big(\frac{1}{4}\frac{\partial^4}{\partial x^4} + \delta\gamma\frac{\partial^2}{\partial x^2}\Big)u(x,t) + f_t(x,0)h(0,t)-2\delta^2u(x,0)\delta_0(t).
\end{align}
\end{theorem}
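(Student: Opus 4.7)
The plan is to use the subordination representation
\[
u(x,t) \;=\; \int_{0}^{\infty} f(x,y)\, h(y,t)\, dy,
\]
where $f(x,y) = (2\pi y)^{-1/2} e^{-x^2/(2y)}$ is the transition density of Brownian motion and therefore satisfies the heat equation $f_y = \tfrac{1}{2} f_{xx}$ (hence $f_{yy} = \tfrac{1}{4} f_{xxxx}$). Differentiating in $t$ under the integral sign gives $2\delta^2 u_t(x,t) = \int_0^\infty f(x,y)\,[\,2\delta^2 h_t(y,t)\,]\,dy$, and I would substitute the PDE of Theorem 3.1 (applied with $y$ in place of $x$), namely
\[
2\delta^2 h_t(y,t) \;=\; h_{yy}(y,t) - 2\delta\gamma\, h_y(y,t) - 2\delta^2 h(y,0)\,\delta_0(t).
\]
The Dirac term integrates to $-2\delta^2 u(x,0)\delta_0(t)$, accounting for the last term in the target equation.

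The core of the proof is to integrate by parts in $y$ and let the heat equation convert $y$-derivatives of $f$ into $x$-derivatives of $u$. For the $h_{yy}$ term I would integrate by parts twice, collecting the boundary contributions at $y=0$:
\[
\int_0^\infty f(x,y)\, h_{yy}(y,t)\,dy \;=\; -f(x,0)\,h_y(0,t) + f_y(x,0)\,h(0,t) + \tfrac{1}{4}\, u_{xxxx}(x,t),
\]
where I used $f_{yy} = \tfrac{1}{4} f_{xxxx}$ and pulled $\partial_x^4$ outside the integral. For the $-2\delta\gamma\, h_y$ term, one integration by parts yields
\[
-2\delta\gamma \int_0^\infty f(x,y)\, h_y(y,t)\,dy \;=\; 2\delta\gamma\, f(x,0)\, h(0,t) + \delta\gamma\, u_{xx}(x,t),
\]
again via $f_y = \tfrac{1}{2} f_{xx}$.

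At this stage the boundary contributions at $y=0$ add to
\[
-f(x,0)\,h_y(0,t) + 2\delta\gamma\, f(x,0)\, h(0,t) + f_y(x,0)\, h(0,t),
\]
and the relation $h_y(0,t) = 2\delta\gamma\, h(0,t)$ established in Proposition 3.3 (equation \eqref{dlim}) makes the first two terms cancel exactly, leaving $f_y(x,0)\,h(0,t)$, which is the $f_t(x,0)\,h(0,t)$ appearing in the statement (since the second argument of $f$ plays the role of a time variable). Collecting everything yields the asserted PDE.

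The main technical obstacle is justifying the integration by parts, namely that the boundary terms at $y=\infty$ vanish. I would handle this using the exponential decay of $h(y,t)$ in $y$ from Theorem 2.3 (i.e.\ $h(y,t)=O(y^{-1}e^{\delta\gamma y - y^2/(4t)})$) together with the matching estimate $h_y(y,t)=O(y\,e^{\delta\gamma y - y^2/(4t)})$ from Proposition 3.3; against these, the Gaussian factor $f(x,y)$ stays bounded in $y$, so the product decays and the boundary pieces at infinity are zero. The same decay (combined with $f_y$ being locally integrable and bounded for $y$ away from $0$) justifies differentiation under the integral and the interchange of $\partial_x^4$ with the $y$-integration that produces $u_{xxxx}$.
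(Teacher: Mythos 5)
Your proposal is correct and follows essentially the same route as the paper's own proof: the same subordination representation, substitution of the PDE from Theorem 3.1, integration by parts in the subordination variable with the heat equation converting $y$-derivatives of the Gaussian kernel into $x$-derivatives of $u$, and cancellation of boundary terms via $h_y(0,t)=2\delta\gamma h(0,t)$ from Proposition 3.3. Your added justification of the vanishing boundary terms at infinity via the decay estimates of Theorem 2.3 and Proposition 3.3 is a welcome extra detail the paper leaves implicit.
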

\noindent\begin{proof}
Since the inner and outer process are independent, the density function of $X(t)$ is
\beq
u(x, t) = \int_{0}^{\infty} w(x, r) h(r, t)\,dr,
\eeq
where $w(x, t)$ is the density of the Brownian motion $B(t)$, and $h(r,t)$ is the density of the process $H(t)$.
 Since the FT
$
\hat{w} (u, t) = \exp(-\frac{1}{2} tu^2),
$
we have
\beq\label{bmge}
\frac{\partial \hat w}{\partial t} = \frac{1}{2} (iu)^2 \hat w, \;\; {\rm and \; hence}\;\; \frac{\partial w}{\partial t} = \frac{1}{2}\frac{\partial^2 w}{\partial x^2},
\eeq
which is the governing equation for density of $B(t)$, the standard heat equation.  Now write
\begin{align*}
2\delta^2 \frac{\partial}{\partial t} u(x, t)
& = \int_{0}^{\infty} w(x,r)2\delta^2\frac{\partial}{\partial t}h(r, t)dr\\
& =  \int_{0}^{\infty}  w(x, r)\Big(\frac{\partial^2 }{\partial r^2} - 2\delta\gamma \frac{\partial }{\partial r}\Big)h(r, t)dr-2\delta^2\delta_0(t)\int_{0}^{\infty}w(x,r)h(r,0)dr\\
&= w(x,r)\frac{\partial}{\partial r}h(r,t)\Big|_{r=0}^{\infty}- \frac{\partial}{\partial r}w(x,r)h(r,t)\Big |_{r=0}^{\infty}-2\delta\gamma w(x,r)h(r,t)\Big|_{r=0}^{\infty}\\
&\hspace{1cm}+\int_{0}^{\infty} \frac{\partial^2}{\partial r^2}w(x, r) h(r, t)dr + 2\delta\gamma \int_{0}^{\infty} \frac{\partial}{\partial r}w(x, r) h(r, t)dr - 2\delta^2 u(x,0)\delta_0(t)\\
& = -w(x,0)h_x(0,t)+ w_t(x,0)h(0,t)+2\delta\gamma w(x,0)h(0,t)\\
&\hspace{1cm}+ \frac{1}{4}\int_{0}^{\infty} \frac{\partial^4}{\partial x^4}w(x, r) h(r, t)dr + \delta\gamma \int_{0}^{\infty} \frac{\partial^2}{\partial x^2}w(x, r) h(r, t)dr - 2\delta^2 u(x,0)\delta_0(t)\\
&= -w(x,0)h_x(0,t)+ w_t(x,0)h(0,t)+2\delta\gamma w(x,0)h(0,t)\\
&\hspace{1cm}+ \Big(\frac{1}{4}\frac{\partial^4}{\partial x^4}+\delta\gamma \frac{\partial^2}{\partial x^2}\Big)u(x,t) - 2\delta^2 u(x,0)\delta_0(t) ~~(\mbox{using}~\eqref{bmge})\\
&= \Big(\frac{1}{4}\frac{\partial^4}{\partial x^4}+\delta\gamma \frac{\partial^2}{\partial x^2}\Big)u(x,t)+w_t(x,0)h(0,t)+2\delta^2u(x,0)\delta_0(t)~~(\mbox{using Proposition}~\ref{dlim})
\end{align*}
and hence the result follows.
\end{proof}
\begin{remark}
When $\delta=1$ and $\gamma=0$, the following fractional PDE holds:
\begin{align*}
\sqrt{2} \left(\frac{\partial}{\partial x}\right)^{1/2}u(x,t) &= \int_{0}^{\infty}w(x,r)\sqrt{2}\left(\frac{\partial}{\partial x}\right)^{1/2}h(r,t)dr\\
&= -\int_{0}^{\infty}w(x,r)\frac{\partial}{\partial r}h(r,t)dr - \sqrt{\frac{2}{\pi t}}\int_{0}^{\infty}w(x,r)h(r,0)dr\\
&=w(x,0)h(0,t)+\frac{1}{2}\int_{0}^{\infty}\frac{\partial^2}{\partial x^2} w(x,r)h(r,t)dr - \sqrt{\frac{2}{\pi t}}u(x,0)\\
&= w(x,0)h(0,t)+\frac{1}{2}\frac{\partial^2}{\partial x^2}u(x,t)-\sqrt{\frac{2}{\pi t}}u(x,0).
\end{align*}
An interesting discussion for subordinated process with Brownian motion and a more general Markov process can be found in Baeumer {\it et al.}~(2009).
\end{remark}

\vone
\noindent{\bf Acknowledgments.}  The authors are grateful to the reviewers for several helpful suggestions and comments
which improved the quality of the paper.  The second author
(A. Kumar) wishes to thank Council of Scientific and Industrial Research (CSIR), India, for the award of a research fellowship.
\vtwo
\noindent {\bf \Large References}
\noindent
\begin{namelist}{xxx}

\item{} Abramowitz, M. and Stegun, I. A. (eds) (1992). {\it Handbook of Mathematical Functions with Formulas, Graphs and
Mathematical Tables}. Dover, New York.
\item{} Allouba, H. (2002). Brownian-time processes: The pde connection and the corresponding Feynman-Kac formula. {\it Trans. Amer. Math. Soc.} {\bf 354}, 4627-4637.
\item{} Allouba, H. and Zheng, W. (2001). Brownian-time processes: The pde connection and the half-derivative generator. {\it Ann. Probab.} {\bf 29}, 1780-1795.
\item{}  Applebaum, D. (2009). {\it L\'evy Processes and Stochastic Calculus}. 2nd ed., { Cambridge University Press},
Cambridge, U.K.

\item {} Baeumer, B. and Meerschaert, M. M. (2010). Tempered stable L\'évy motion and transient super-diffusion. {\it J. Comput. Appl. Math.} {\bf 233}, 2438-2448.

\item{} Baeumer, B. Meerschaert, M. M. and Nane, E. (2009). Brownian subordinators and fractional Cauchy problems. {\it Trans. Amer. Math. Soc.} {\bf 361}, 3915-3930.

\item{}  Barndorff-Nielsen, O. E. (1997). Normal inverse Gaussian distributions and stochastic volatility modeling.
{\it Scand. J. Statist.}, {\bf 24}, 1-13.

\item{}  Bertoin, J. (1996). {\it L\'evy Processes}. {Cambridge University Press}, Cambridge.

\item Boukai, B. (1990). An explicit expression for the distribution of the supremum of Brownian motion with a change point. {\it Comm. Statist. Theory Methods}. {\bf 19}, 31-40.

\item{}  Cont, R. and Tankov, P. (2004). {\it Financial Modeling with Jump Processes}. { Chapman \& Hall CRC Press}, Boca Raton.
\item{} Decreusefond, L. and Nualart, D (2008). Hitting times for Gaussian processes.
{\it Ann. Probab.} {\bf 36}, 319-330.
\item{}  Devroye, L. (1986). {\it Nonuniform Random Variate Generation}. {Springer}, New York.
\item{} Dufresne, F. and Gerber, H. U. (1993) The probability of ruin for the inverse
Gaussian and related processes. {\it Insurance Math. Econom.} {\bf 12}. 9-22.
\item {} Gerber, H. U. (1992). On the probability of ruin for infinitely divisible claim amount distributions. {\it Insurance Math. Econom.} {\bf 11}. 163-166.
\item{}  Halgreen, C. (1979). Self-decomposability of the generalized inverse Gaussian and hyperbolic distributions. {\it Z. Wahrsch. Verw.
 Gebiete.} {\bf 47}, 13-17.
\item{} Kumar, A., Meerschaert, M. M. and Vellaisamy, P. (2011). Fractional normal inverse Gaussian diffusion. {\it Statist. Probab. Lett.} {\bf 81}, 146-152.

\item {} Kumar, A. and Vellaisamy, P. (2012). Fractional normal inverse Gaussian process. {\it Methodol. Comput. Appl. Probab.}, {\bf 14}, 263-283.
\item{} Lee, M.-L. T. and Whitmore, G. A. (2006). Threshold regression for survival analysis: modeling event times by a stochastic process reaching a boundary. {\it Statist. Sci.}, {\bf 21}, 501-513.

\item{} Meerschaert, M. M. and Scheffler, H. (2004). Limit theorems for continuous-time random walks with infinite mean waiting times.
{\it J. App. Probab.} {\bf 41}, 623-638.

\item{} Meerschaert, M. M. and Scheffler, H. (2008). Triangular array limits for continuous time random walks.
{\it Stochastic Process. Appl.} {\bf 118}, 1606-1633.
\item{} Nane, E. (2008). Higher order PDE's and iterated processes. {\it Trans. Amer. Math. Soc.}, {\bf 360}, 2681-2692.
\item{} Olver, F. W. J. (1974). {\it Asymptotics and Special Functions}. Academic Press. New York.
\item{} Podlubny, I. (1999). {\it Fractional Differential Equations}. Academic Press. London. U. K.
\item{} Roberts, G. E. and Kaufman, H. (1966). {\it Table of Laplace Transforms.} W. B. Saunders. Philadelphia.
\item{} Samorodnitsky, G. and Taqqu, M. S. (2000). {\it Stable Non-Gaussian Random Processes: Stochastic Models with Infinite Variance.} CRC Press,
Boca Raton, Florida.
\item{} Sato, K. (1999). {\it L\'evy Processes and Infinitely Divisible Distributions}. Cambridge University Press.

\item{} Schiff, J. L. (1999). {\it The Laplace Transform: Theory and Applications}. Springer-Verlag, New York.
\item{}  Steutel, F.W. and Van Harn, K. (2004). {\it Infinite Divisibility of Probability Distributions on the Real Line.}
Marcel Dekker, New York.

\item{} Uchaikin, V. V. and Zolotarev, V. M. (1999). {\it Chance and Stability: Stable Distributions and Their Applications}.
VSP. Utrecht.
\item{} Veillette, M. and Taqqu, M. S. (2010a). Using differential equations to obtain joint moments of first-passage times of increasing L\'evy processes. {\it Statist. Probab. Lett.} {\bf 80}, 697-705.
\item{} Veillette, M. and Taqqu, M. S. (2010b). Numerical computation of first-passage times
of increasing L\'evy Processes. {\it Methodol. Comput. Appl. Probab.}, {\bf 12}, 695--729.



\end{namelist}

\end{document}